\numberwithin{equation}{section}
\title{Frozen Gaussian Grid-point Correction For Semi-classical Schr{\"o}dinger Equation}
\author{Lihui Chai\thanks{School of Mathematics, Sun Yat-sen University, Guangzhou, China.}\and Zili Deng\thanks{School of Mathematics, Sun Yat-sen University, Guangzhou, China.}}
\begin{document}

\maketitle

\begin{abstract}
    We propose an efficient reconstruction algorithm named the frozen Gaussian grid-point correction (FGGC) for computing the Schr{\"o}dinger equation in the semi-classical regime using the frozen Gaussian approximation (FGA). The FGA has demonstrated its superior efficiency in dealing with semi-classical problems and high-frequency wave propagations. However, reconstructing the wave function from a large number of Gaussian wave-packets is typically computationally intensive. This difficulty arises because these wave-packets propagate along the FGA trajectories to non-grid positions, making the application of the fast Fourier transform infeasible. In this work, we introduce the concept of ``on-grid correction'' and derive the formulas for the least squares approximation of Gaussian wave-packets, and also provide a detailed process of the FGGC algorithm. Furthermore, we rigorously prove that the error introduced by the least squares approximation on each Gaussian wave-packet is independent of the semi-classical parameter $\varepsilon$. Numerical experiments show that the FGGC algorithm can significantly improve reconstruction efficiency while introducing only negligible error, making it a powerful tool for solving the semi-classical Schr{\"o}dinger equation,  especially in applications requiring both accuracy and efficiency.
\end{abstract}

\noindent\textbf{Keywords:} Schr{\"o}dinger equation, semi-classical regime, frozen Gaussian approximation, least square approximation

\section{Introduction}\label{sec:1}
The Schr{\"o}dinger equation lies at the heart of quantum mechanics, providing the fundamental framework for describing the evolution of quantum systems over time \cite{GriffithsQuantumMechanics}. Research on the Schr{\"o}dinger equation is crucial for uncovering fundamental properties of matter and advancing fields such as materials science, condensed matter physics, and quantum information science \cite{electronic_structures}. In this paper, we consider numerical methods for the semi-classical Schr{\"o}dinger equation:
\begin{equation}
    \mathrm{i}\varepsilon\frac{\partial u}{\partial t} = -\frac{\varepsilon^2}{2}\Delta u + V(\bm{x})u,\quad (t,\bm{x})\in\mathbb{R}\times\mathbb{R}^d,
    \label{eq:1.1}
\end{equation}
with the WKB initial condition:
\begin{equation}
    u_0(\bm{x})=\sqrt{n_0(\bm{x})}\exp\left(\frac{\mathrm{i}}{\varepsilon}S_0(\bm{x})\right).
    \label{eq:1.2}
\end{equation}
Here, $\varepsilon$ is the semi-classical parameter describing the ratio of microscopic to macroscopic scales, with $0<\varepsilon\ll1$ in the semi-classical regime; $d$ is the spatial dimension; $\mathrm{i}=\sqrt{-1}$ is the imaginary unit; $u(t,\bm{x})$ is the complex-valued wave function; $V(\bm{x})$ is a given real-valued potential; $\Delta=\Delta_{\bm{x}}$ is the Laplace operator; $n_0(\bm{x})$ and $S_0(\bm{x})$ are smooth, real-valued, and independent of $\varepsilon$; and $n_0(\bm{x})$ decays sufficiently fast as $\left|\bm{x}\right|\to\infty$.

The properties of the Schr{\"o}dinger equation have been widely studied from both theoretical and numerical perspectives. It is well known that the wave function $u(t,\bm{x})$ exhibits oscillations of wavelength $\varepsilon$ in space and time and does not converge strongly as $\varepsilon\to0^+$. However, the weak convergence of the wave function is insufficient for passing to the limit in macroscopic densities, making theoretical analysis of the semi-classical limit mathematically challenging. Since the 1990s, significant progress has been made in the study of the semi-classical limit of the Schr{\"o}dinger equation. Using microlocal analysis tools such as H-measures \cite{H-measures} and Wigner measures \cite{Wigner-measures_Lions,Wigner_transform,WignerSP,WignerFigalli}, researchers have rigorously shown that the semi-classical limit of the Schr{\"o}dinger equation for a range of complex physical systems is described by classical dynamical equations in phase space. These results illuminate the connection and transition between quantum mechanics and classical mechanics. 

Meanwhile, the numerical solution of the Schr{\"o}dinger equation requires substantial computational resources to achieve accurate physical observables when $\varepsilon$ is small. The challenge becomes even more pronounced when accurate wave functions are of interest \cite{actanumerica}. Existing numerical algorithms for solving the Schr{\"o}dinger equation can be broadly classified into two categories: direct discretization methods and asymptotic methods. Direct discretization methods, such as the finite difference method \cite{Schro_FD1,Schro_FD2} and the time-splitting spectral method (TSSP) \cite{tssp}, have been developed over several decades and form a robust class of algorithms with high accuracy. However, these methods often suffer from low efficiency and high mesh cost: the time-splitting spectral method requires $\Delta t=O(\varepsilon)$ and $\Delta x=O(\varepsilon)$ to guarantee an accurate approximation of the wave function, while the finite-difference method requires even $\Delta t=o(\varepsilon)$ and $\Delta x=o(\varepsilon)$. On the other hand, asymptotic methods such as the WKB approximation \cite{WKB}, the Gaussian beam method \cite{GBM}, and the frozen Gaussian approximation (FGA) \cite{FGAFracSchro,FGAwave} may require relatively fewer computational resources in the semi-classical regime. However, the WKB solution becomes unreliable after the formation of caustics, and the Gaussian beam method also loses accuracy as the beam evolves and propagates, causing its width to increase. 

The frozen Gaussian approximation was initially proposed in quantum chemistry to simulate the Schr{\"o}dinger equation \cite{FGASchro1981,FGASchro1994,FGASchro2006,FGASchro2007,FGASchro2016}, using fixed-width Gaussian functions to approximate the wave function. Recently, FGA theory has been successfully extended and applied to a variety of systems and equations, such as linear strictly hyperbolic systems \cite{fgaconv}, non-strictly hyperbolic systems including elastic wave equations \cite{FGAelastic} and relativistic Dirac equations in the semi-classical regime \cite{FGAnonstrictlyhyperbolic,FGAdirac}, non-adiabatic dynamics in surface hopping problems \cite{FGAsh1,FGAsh2,FGAsh3}, and open quantum systems \cite{open_quantum_system}. These investigations demonstrate the adaptability and effectiveness of FGAs in solving intricate problems in quantum mechanics and high-frequency wave phenomena.

In this paper, we aim to design an efficient reconstruction algorithm for the FGA. Practical experience shows that much more computational effort is required for the wave reconstruction step compared to the initial decomposition step. The initial decomposition can be performed on grid points and utilizes the fast Fourier transform (FFT) to accelerate the calculation of the numerical integral. However, this approach is not directly applicable to the wave reconstruction step, as the grid points evolve over time into off-grid points, complicating the reconstruction process. To enhance reconstruction efficiency, we propose decomposing off-grid Gaussian wave-packets into several on-grid Gaussian wave-packets, which can be efficiently implemented using the least squares approximation. This algorithm is named the frozen Gaussian grid-point correction (FGGC). We then conduct error analysis and numerical experiments, rigorously proving that the error of decomposition for a single Gaussian wave-packet is independent of $\varepsilon$, and numerically demonstrating that the overall error of the FGGC algorithm is also independent of $\varepsilon$. A multi-step solver is also proposed to further improve efficiency for long-time simulations. The results show that, using a basic ``on-grid correction'' idea, the FGGC algorithm can improve the efficiency of the reconstruction step by an order of magnitude, with only a small and negligible error cost.

The remainder of this paper is organized as follows: In \Cref{sec:2}, we begin with a brief review of the FGA formulation and then consider the least squares approximation (LSA) for Gaussian wave-packets. Building upon this LSA problem, we present the basic procedure of the frozen Gaussian grid-point correction algorithm. Subsequently, in \Cref{sec:3}, we perform an error estimate for the FGGC algorithm. Additionally, we propose improved versions of the FGGC that further reduce computational cost. A series of numerical experiments is conducted in \Cref{sec:4}, including cases in one to three dimensions. These results demonstrate that the FGGC algorithm significantly improves computational efficiency during wave reconstruction while introducing only a minor error. Moreover, the FGGC algorithm can further enhance computing performance for long-time evolution problems by employing a multi-step version solver. \Cref{sec:5} concludes this paper.

\section{Frozen Gaussian Approximation and Frozen Gaussian grid-point correction}\label{sec:2}

In this section, we briefly recap the frozen Gaussian approximation to the semi-classical Schr{\"o}dinger equation and introduce the FGGC algorithm employing least squares approximations for wave-packets.

\subsection{The frozen Gaussian approximation for semi-classical Schr{\"o}dinger equation}\label{sec:2.1}

The frozen Gaussian approximation is an asymptotic approximation to \Cref{eq:1.1}, which gives an integral representation of the solution of the wave function:
\begin{equation}
    u_\text{FGA}(t,\bm{x})=\frac{2^{\frac{d}{2}}}{(2\pi\varepsilon)^{\frac{3d}{2}}}\int_{\mathbb{R}^{2d}}A(t,\bm{q},\bm{p})e^{\frac{\mathrm{i}}{\varepsilon}\Phi(t,\bm{x},\bm{q},\bm{p})}\ \mathrm{d}\bm{p}\ \mathrm{d}\bm{q},
    \label{eq:2.1}
\end{equation}
where the phase function is defined as
\begin{equation}
    \Phi(t,\bm{x},\bm{q},\bm{p})=S(t,\bm{q},\bm{p})+\bm{P}(t,\bm{q},\bm{p})\cdot(\bm{x}-\bm{Q}(t,\bm{q},\bm{p}))+\frac{\mathrm{i}}{2}|\bm{x}-\bm{Q}(t,\bm{q},\bm{p})|^2.
    \label{eq:2.2}
\end{equation}

In this integral representation, the position center $\bm{Q}$, momentum center $\bm{P}$, amplitude $A$ and action $S$ are time-dependent variables satisfying an ODE system:
\begin{alignat}{4}
    \frac{\mathrm{d}\bm{Q}}{\mathrm{d}t}&= \bm{P}, &&\bm{Q}(0,\bm{q},\bm{p})=\bm{q},\label{eq:2.3} \\
    \frac{\mathrm{d}\bm{P}}{\mathrm{d}t}&= -\nabla V(\bm{Q}), &&\bm{P}(0,\bm{q},\bm{p})=\bm{p},\label{eq:2.4} \\
    \frac{\mathrm{d}S}{\mathrm{d}t}&= \frac{1}{2}|\bm{P}|^2 - V(\bm{Q}), &&S(0,\bm{q},\bm{p})=0,\label{eq:2.5}\\
    \frac{\mathrm{d}A}{\mathrm{d}t}&=\frac{A}{2}\operatorname{tr}\left(\bm{Z}^{-1}\cdot\frac{\mathrm{d}\bm{Z}}{\mathrm{d}t}\right),\ &&A(0,\bm{q},\bm{p})=2^{\frac{d}{2}}\widehat{u_0}(\bm{q},\bm{p}),\label{eq:2.6}
\end{alignat}
where
\begin{equation}
    \widehat{u}_0(\bm{q},\bm{p})=\int_{\mathbb{R}^d}u_0(\bm{y})e^{\frac{\mathrm{i}}{\varepsilon}\big(-\bm{p}\cdot(\bm{y}-\bm{q})+\frac{\mathrm{i}}{2}|\bm{y}-\bm{q}|^2\big)}\mathrm{d}\bm{y}.\label{eq:initial_decomposition}
\end{equation}
In the above ODE system, we have used the shorthand notation:
\begin{equation}\begin{split}
    \partial_{\bm{z}}=\partial_{\bm{q}}-\mathrm{i}\partial_{\bm{p}}\ \ \ \operatorname{and} \ \ \ \bm{Z}=\partial_{\bm{z}}(\bm{Q}+\mathrm{i}\bm{P}),
    \label{eq:matrixZ_notation}
\end{split}\end{equation}
where $\partial_{\bm{z}}\bm{Q}$,$\partial_{\bm{z}}\bm{P}$ and $\bm{Z}$ are understood as matrices. A detailed derivation for formulas \Cref{eq:2.3}$\sim$\Cref{eq:initial_decomposition} can be found in \cite{FGAwave}. The following proposition shows that the FGA ansatz given by \Cref{eq:2.1} is exact for the initial condition \Cref{eq:initial_decomposition}.

\newtheorem{proposition}{Proposition}
\begin{proposition}\label{thm:a_initial_condition}
    Let $u_0(\bm{x})\in L^2(\mathbb{R}^{d})$ be the initial condition of the Schr{\"o}dinger equation, then we have 
    \begin{equation}\begin{aligned}
        u_0(\bm{x})=\frac{1}{(2\pi\varepsilon)^{\frac{3d}{2}}}\int_{\mathbb{R}^{2d}}A(0,\bm{q},\bm{p})e^{\frac{\mathrm{i}}{\varepsilon}\Phi(0,\bm{x},\bm{q},\bm{p})}\mathrm{d}\bm{q}\ \mathrm{d}\bm{p},
        \label{eq:2.9}
    \end{aligned}\end{equation}
    where the initial condition is given by \Cref{eq:2.3}$\sim$\Cref{eq:initial_decomposition}.
\end{proposition}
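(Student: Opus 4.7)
At $t=0$ the characteristic ODEs \Cref{eq:2.3}--\Cref{eq:2.6} give $\bm{Q}(0,\bm{q},\bm{p})=\bm{q}$, $\bm{P}(0,\bm{q},\bm{p})=\bm{p}$, $S(0,\bm{q},\bm{p})=0$ and $A(0,\bm{q},\bm{p})=2^{d/2}\widehat{u_0}(\bm{q},\bm{p})$. Hence the phase \Cref{eq:2.2} reduces to
\begin{equation*}
\Phi(0,\bm{x},\bm{q},\bm{p})=\bm{p}\cdot(\bm{x}-\bm{q})+\tfrac{\mathrm{i}}{2}|\bm{x}-\bm{q}|^2.
\end{equation*}
The plan is to substitute the explicit formula \Cref{eq:initial_decomposition} for $\widehat{u_0}$ into the right-hand side of \Cref{eq:2.9}, exchange the order of integration, and evaluate the $\bm{q}$- and $\bm{p}$-integrals in closed form.

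First I would combine the exponents coming from $\Phi(0,\bm{x},\bm{q},\bm{p})$ and from the exponent inside $\widehat{u_0}(\bm{q},\bm{p})$, obtaining the total phase
\begin{equation*}
\bm{p}\cdot(\bm{x}-\bm{y})+\tfrac{\mathrm{i}}{2}\bigl(|\bm{x}-\bm{q}|^2+|\bm{y}-\bm{q}|^2\bigr).
\end{equation*}
Completing the square in $\bm{q}$ rewrites the quadratic part as $|\bm{q}-(\bm{x}+\bm{y})/2|^2+\tfrac{1}{4}|\bm{x}-\bm{y}|^2$, which decouples the $\bm{q}$-dependence from the $\bm{y}$- and $\bm{p}$-dependence. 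The $\bm{q}$-integral is then a purely real Gaussian giving $(\pi\varepsilon)^{d/2}$, and the $\bm{p}$-integral yields $(2\pi\varepsilon)^d\,\delta(\bm{x}-\bm{y})$. Integrating against $u_0(\bm{y})$ and collecting the prefactors $(2\pi\varepsilon)^{-3d/2}\cdot 2^{d/2}\cdot(\pi\varepsilon)^{d/2}\cdot(2\pi\varepsilon)^d=1$ produces $u_0(\bm{x})$, noting that the residual factor $e^{-|\bm{x}-\bm{y}|^2/(4\varepsilon)}$ equals $1$ on the support of the delta.

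The main obstacle is that the $\bm{p}$-integral is not absolutely convergent, so Fubini's theorem cannot be applied directly; for the same reason the ``$\delta$-function'' step is really a distributional identity. I would handle this in the standard way by first proving the identity for $u_0$ in the Schwartz class $\mathcal{S}(\mathbb{R}^d)$, where one can insert a Gaussian regularizer $e^{-\delta|\bm{p}|^2}$, legitimately exchange the order of integration, and then pass to the limit $\delta\to 0^+$ using dominated convergence (the inner Gaussian in $\bm{q}$ provides the needed decay). Extension to $u_0\in L^2(\mathbb{R}^d)$ then follows from the density of $\mathcal{S}$ in $L^2$ and the boundedness of the linear operator $u_0\mapsto $ (right-hand side of \Cref{eq:2.9}) from $L^2$ to $L^2$; this boundedness, in turn, is a direct consequence of the coherent-state/FBI-transform resolution of identity, of which \Cref{eq:2.9} is essentially a restatement.
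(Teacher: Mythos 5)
Your computation is correct and is the standard proof of this resolution-of-identity statement: the paper itself does not prove the proposition (it defers the derivation to the cited reference \cite{FGAwave}), and your route — substituting \Cref{eq:initial_decomposition}, completing the square in $\bm{q}$ to get the Gaussian factor $(\pi\varepsilon)^{d/2}$, producing $(2\pi\varepsilon)^d\delta(\bm{x}-\bm{y})$ from the $\bm{p}$-integral, and checking that the constants combine to $1$ — is exactly the expected argument, with the Fubini/regularization and density issues handled appropriately.
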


The FGA approximates the Schr{\"o}dinger equation by three steps: First, decompose the initial waves into several Gaussian wave-packets in the phase space to determine $A(0,\bm{q},\bm{p})$ by \Cref{eq:initial_decomposition} for each $(\bm{q},\bm{p})$. Second, propagate the wave-packets along the Hamiltonian flow by \Cref{eq:2.3}$\sim$\Cref{eq:2.6}. Third, reconstruct the wave function using the wave-packets after propagation. 

In practical algorithms, the FGA method requires the calculation of a series of discrete summations to approximate the integrals in initial decomposition and wave reconstruction. For initial decomposition, FFT can be utilized to accelerate the calculation. However, for the wave reconstruction, the process can be approximated by the following discrete form
\begin{equation}\begin{aligned}
    u(t,\bm{x})&\approx\ \frac{(\Delta q\Delta p)^d}{(2\pi\varepsilon)^{\frac{3d}{2}}}\sum_{j=1}^{M}A_je^{\frac{\mathrm{i}}{\varepsilon}\left(S_j+\bm{P}_j\cdot\left(\bm{x}-\bm{Q}_j\right)\right)-\frac{1}{2\varepsilon}\left(\bm{x}-\bm{Q}_j\right)^2},
    \label{eq:2.10}
\end{aligned}\end{equation}
in which FFT is no longer feasible. This is because the wave-packets generally do not align with the grid points of the phase space. Consequently, the computational time for wave reconstruction significantly exceeds that of the initial decomposition, and this computational time escalates rapidly with an increase in dimensionality $d$ or a decrease in the semi-classical parameter $\varepsilon$.

\subsection{Least squares approximation problem and formulas for Gaussian wave-packets}\label{sec:2.2}

As mentioned earlier, summing up Gaussian wave-packets centered at off-grid points demands a significant amount of time. To address this issue, we propose a method for decomposing an off-grid Gaussian wave-packet into several on-grid wave-packets. A natural idea is to utilize the least squares approximation (LSA), which offers an ``optimal'' approximation in the $L^2$ sense. 

Generally, consider the least squares approximation of a complex-valued function $\phi(x)$ in a function space $V$ spanned by a set of basis functions $\left\{\psi^{(k)}\right\}_{k=1}^{n}$. We can express the approximation by 
\begin{equation}\begin{split}
    \phi^* \approx \sum_{k=1}^n c^{(k)} \psi^{(k)}.
    \label{eq:wave_packet_approximation}
\end{split}\end{equation}
The coefficients are determined by the normal equations:
\begin{equation}\begin{split}
    \sum_{k=1}^n A_{jk}c^{(k)}=f_j,\ \ j=1,2,\dots,n,
    \label{eq:normal_equation}
\end{split}\end{equation}
where
\begin{align}
    A_{jk}=\left<\psi^{(j)},\psi^{(k)}\right>\text{ and }f_j=\left<\psi^{(j)},\phi\right>,
    \label{eq:normal_equation_Af}
\end{align}
and the inner product $\langle\cdot,\cdot\rangle$ is defined for complex-valued functions in $L^2(\mathbb{R}^d)$ by 
\begin{equation}\begin{split}
    \langle g,h\rangle=\int_{\mathbb{R}^d}g(\bm{x})h(\bm{x})^\dagger\ \mathrm{d}\bm{x}.
    \label{eq:2.14}
\end{split}\end{equation}
In the FGA ansatz, the given function and the approximating functions are in the form of fixed-width Gaussian wave-packet. For instance:
\begin{equation}\begin{split}
    \phi(\bm{x})=G\left(\bm{x};\bm{Q}_0,\bm{P}_0,\varepsilon\right)\text{ and }
    \psi^{(k)}(\bm{x})=G\left(\bm{x};\bm{Q}_0^{(k)},\bm{P}_0^{(k)},\varepsilon\right),
    \label{eq:phi_and _psi_k}
\end{split}\end{equation}
in which $\bm{Q}_0^{(k)}$ and $\bm{P}_0^{(k)}$ should be the grid points near $\bm{Q}_0$ and $\bm{P}_0$ respectively. Here, $G(\bm{x};\bm{Q},\bm{P},\varepsilon)$ denotes the Gaussian wave-packet
\begin{equation}\begin{aligned}
    G(\bm{x};\bm{Q},\bm{P},\varepsilon)=e^{\frac{\mathrm{i}}{\varepsilon}\bm{P}\cdot(\bm{x}-\bm{Q})-\frac{1}{2\varepsilon}|\bm{x}-\bm{Q}|^2}.
    \label{eq:gaussian_wave_packet}
\end{aligned}\end{equation}
For simplicity, we may omit $\varepsilon$ and write $G(\bm{x};\bm{Q},\bm{P},\varepsilon)$ as $G(\bm{x};\bm{Q},\bm{P})$ when $\varepsilon$ is fixed and not of concern. 

Essentially, these inner products are interactions between two Gaussian wave-packets and are straightforward to compute. For instance, if $g(\bm{x})=G(\bm{x};\bar{\bm{Q}},\bar{\bm{P}})$ and $h(\bm{x})=G(\bm{x};\widetilde{\bm{Q}},\widetilde{\bm{P}})$, then
\begin{equation}\begin{split}
    \left<g,h\right>&=\int_{\mathbb{R}^d}g(\bm{x}) \, h(\bm{x})^\dagger\ \mathrm{d}\bm{x}\\
    &=(\pi\varepsilon)^{\frac{d}{2}}e^{-\frac{1}{4\varepsilon}\left|\widetilde{\bm{Q}}-\bar{\bm{Q}}\right|^2-\frac{1}{4\varepsilon}\left|\widetilde{\bm{P}}-\bar{\bm{P}}\right|^2+\frac{\mathrm{i}}{2\varepsilon}(\widetilde{\bm{Q}}-\bar{\bm{Q}})\cdot(\widetilde{\bm{P}}+\bar{\bm{P}})}.
    \label{eq:inner_product}
\end{split}\end{equation}
So, substitute $\widetilde{\bm{Q}},\bar{\bm{Q}},\widetilde{\bm{P}},\bar{\bm{P}}$ with $\bm{Q}^{(j)},\bm{Q}^{(k)},\bm{P}^{(j)},\bm{P}^{(k)}$, one can calculate all the entries of the coefficient matrix $\bm{A}$ and right-hand side vector $\bm{f}$ of equation \Cref{eq:normal_equation}. For example, for $d=1$, two nearby points can be chosen for each component as follows:
\begin{figure}
    \centering
    \includegraphics[width=0.6\linewidth]{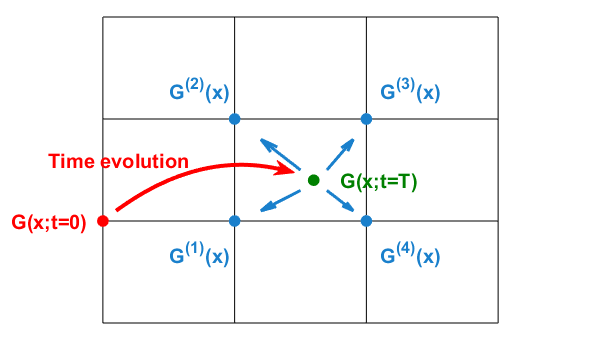}
    \caption{An on-grid wave-packet(red) evolves to an off-grid position(green) and then split into four on-grid wave-packets(blue).}
    \label{fig:pointsplit}
\end{figure}

\begin{equation}\begin{aligned}
     Q^{(1)} &= \left(\left[\frac{Q}{\Delta q}\right]\right)\Delta q,   &&P^{(1)} = \left(\left[\frac{P}{\Delta p}\right]\right)\Delta p,\\
     Q^{(2)} &= \left(\left[\frac{Q}{\Delta q}\right]+1\right)\Delta q, &&P^{(2)} = \left(\left[\frac{P}{\Delta p}\right]\right)\Delta p,\\
     Q^{(3)} &= \left(\left[\frac{Q}{\Delta q}\right]\right)\Delta q,   &&P^{(3)} = \left(\left[\frac{P}{\Delta p}\right]+1\right)\Delta p,  \\
     Q^{(4)} &= \left(\left[\frac{Q}{\Delta q}\right]+1\right)\Delta q ,&&P^{(4)} = \left(\left[\frac{P}{\Delta p}\right]+1\right)\Delta p.
    \label{eq:neighbor_example}
\end{aligned}\end{equation}
Here, $\left[\ \cdot\ \right]$ denotes the floor function. As illustrated in \Cref{fig:pointsplit}, an on-grid Gaussian wave-packet evolves to an off-grid position along the FGA trajectory, and is then decomposed into four neighboring on-grid Gaussian wave-packets via least squares approximation. In particular, for $\varepsilon=1/64$, $\Delta p=\pi/32$, $\Delta q=1/16$, and $0\le Q<\Delta q$, $0\le P<\Delta p$, then \Cref{eq:neighbor_example} yields $Q^{(k)}=0$ or $\Delta q$ and $P^{(k)}=0$ or $\Delta p$. Each term in \Cref{eq:normal_equation} can be computed using \Cref{eq:inner_product}, and then the normal equations can be solved to obtain the coefficients. The coefficients and errors for the least squares approximation as functions with respect to $Q$ and $P$, are shown in \Cref{fig:LSA_coef_err}. The error of the least squares approximation using four nearby points is acceptable, and numerical experiments in \Cref{sec:3.1} show that the error can be further reduced by considering more neighbors.

\begin{figure}[t]
    \centering
    \includegraphics[width=1\linewidth]{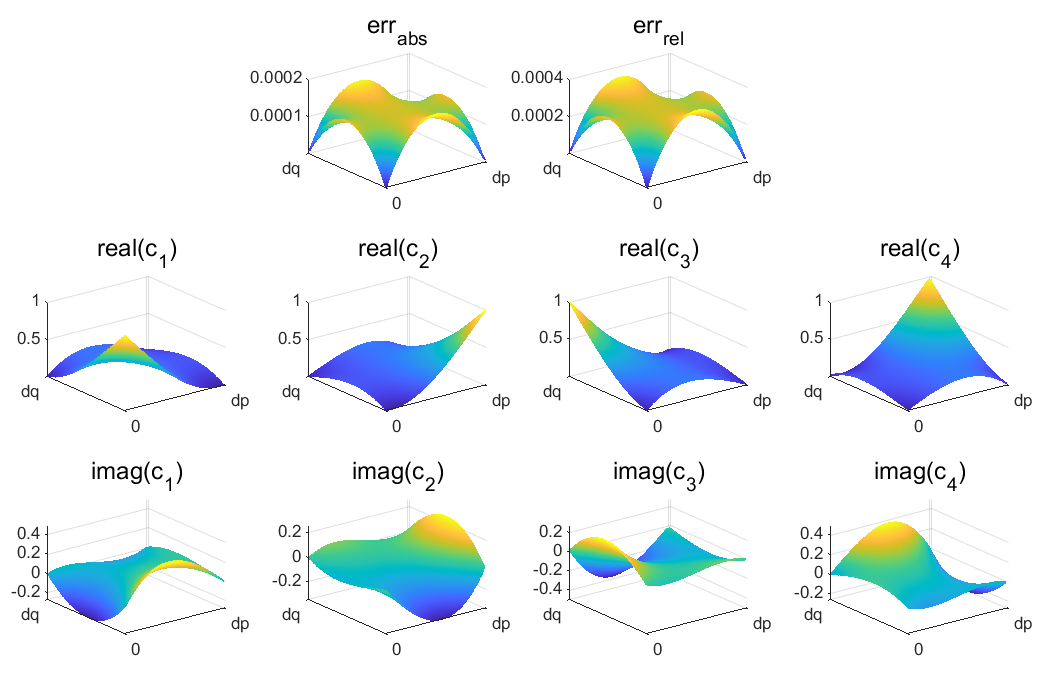}
    \caption{The LSA error and coefficients, in the case of $d=1$, neighbor strategy \textbf{Q2P2}, $\varepsilon=1/64$, $\Delta q=1/16$, $\Delta p=\pi/32$, and $(Q_0,P_0)\in[0,\Delta q)\times[0,\Delta p)$.}
    \label{fig:LSA_coef_err}
\end{figure}

\subsection{The frozen Gaussian grid-point correction algorithm}\label{sec:2.3}

Suppose that for any $(\bm{q},\bm{p})$ in the phase space, $A(0,\bm{q},\bm{p})$ has already been calculated by \Cref{eq:initial_decomposition}. By evolving the ODE system \Cref{eq:2.3} $\sim$ \Cref{eq:initial_decomposition} up to time $t$, one can reconstruct the wave function as 
\begin{equation}\begin{aligned}
    u(t,\bm{x})&\approx\ \frac{(\Delta q\Delta p)^d}{(2\pi\varepsilon)^{\frac{3d}{2}}}\sum_{j=1}^{M}A_j e^{\frac{\mathrm{i}}{\varepsilon}S_j}G\left(\bm{x};\bm{Q}_j,\bm{P}_j\right).
    \label{eq:summing_wave_packet}
\end{aligned}\end{equation}
Here, $\Delta q$ and $\Delta p$ denote the discrete step size for numerical integration, $M$ denotes the number of wave-packets included in the calculation. For simplicity, we omit the explicit time dependence such as $A_j=A_j(t)$ and adopt the notation $G_j(\bm{x})=G\left(\bm{x};\bm{Q}_j,\bm{P}_j\right)$ in the following text. For any $j$, using the formulas introduced previously, we can perform the least squares approximation for each $G_j(\bm{x})$. We first introduce some notations and definitions for choosing a set of neighbors. 
\newtheorem{definition}{Definition}
\begin{definition}
    Assume that the semi-classical parameter $\varepsilon$ and the discrete mesh step $\Delta q$ and $\Delta p$ are given. Then, choose $2n$ sequences of shift integers $\bm{\delta q}^{(k)}=\left(\delta q^{(k)}_l\right)_{l=1}^d$ and  $\bm{\delta p}^{(k)}=\left(\delta p^{(k)}_l\right)_{l=1}^d$ for $k=1,2,\cdots,n$, and define the neighbor points map $\bm{\mathcal{N}}(\bm{Q},\bm{P};k)=\left(\bm{Q}^{(k)},\bm{P}^{(k)}\right)$ by:
    \begin{equation}\begin{aligned}
        \bm{Q}^{(k)}=&\operatorname{floor}\left(\frac{\bm{Q}}{\Delta q}\right)\Delta q+\bm{\delta q}^{(k)}\Delta q,\\
        \bm{P}^{(k)}=&\operatorname{floor}\left(\frac{\bm{P}}{\Delta p}\right)\Delta p+\bm{\delta p}^{(k)}\Delta p,
        \label{eq:neighbor_representation}
    \end{aligned}\end{equation}
    where $\operatorname{floor}(\cdot)$ denotes the floor function.
\end{definition}

Therefore, the neighbor point map \Cref{eq:neighbor_representation} can be interpreted as a ``neighbor strategy'': selecting a set of $\delta q_l^{(k)}$ and $\delta p_l^{(k)}$ generates $n$ distinct on-grid neighbor wave-packets for the target wave-packet. Now we choose a set of neighbors for each specific wave-packet $G_j$, where the $k$-th neighbor can be represented as
\begin{equation}\begin{aligned}
    G_j^{(k)}(\bm{x}):=G(\bm{x};\bm{\mathcal{N}}(\bm{Q}_j,\bm{P}_j;k)).
    \label{eq:k_th_neighbor}
\end{aligned}\end{equation}
Then the least squares approximation
\begin{equation}\begin{aligned}
    G_j(\bm{x})\approx\sum_{k=1}^{n}c_j^{(k)}G_j^{(k)}(\bm{x}),
    \label{eq:2.22}
\end{aligned}\end{equation}
can be obtained by solving the normal equations \Cref{eq:normal_equation} with $\phi=G_j$ and $\psi^{(k)}=G_j^{(k)}$. Substituting \Cref{eq:2.22} into \Cref{eq:summing_wave_packet} yields
\begin{equation}\begin{aligned}
    u(t,\bm{x})\approx\ \frac{(\Delta q\Delta p)^d}{(2\pi\varepsilon)^{\frac{3d}{2}}}\sum_{j=1}^{M}A_j e^{\frac{\mathrm{i}}{\varepsilon}S_j}\sum_{k=1}^{n}c_j^{(k)}G_j^{(k)}(\bm{x}).
    \label{eq:2.23}
\end{aligned}\end{equation}
Note that the parameters of $G_j^{(k)}$, namely $\bm{Q}^{(k)}_j$ and $\bm{P}^{(k)}_j$, are all on-grid. Thus, we can combine Gaussian wave-packets sharing the same $\bm{Q}_j^{(k)}$ and $\bm{P}_j^{(k)}$:
\begin{equation}\begin{aligned}
    u(t,\bm{x})\approx\frac{(\Delta q\Delta p)^d}{(2\pi\varepsilon)^{\frac{3d}{2}}}\sum_{(\bm{q},\bm{p})\operatorname{\ on\ grid}}\widetilde{A}(\bm{q},\bm{p})G(\bm{x};\bm{q},\bm{p}),
    \label{eq:wave_reconstruction1}
\end{aligned}\end{equation}
in which
\begin{equation}\begin{aligned}
    \widetilde{A}(\bm{q},\bm{p})=\sum_{\substack{\bm{Q}_j^{(k)}=\bm{q}\\\bm{P}_j^{(k)}=\bm{p}}}A_j e^{\frac{\mathrm{i}}{\varepsilon}S_j}c_j^{(k)}.
    \label{eq:wave_reconstruction2}
\end{aligned}\end{equation}
Now the summation \Cref{eq:wave_reconstruction1} has such a form so that FFT can be utilized directly to enhance efficiency. In summary, we present the pseudo-code for the frozen Gaussian grid-point correction (FGGC) in \Cref{alg:fggc}
\begin{algorithm}\caption{Frozen Gaussian grid-point correction (FGGC) - A basic version.}
\label{alg:fggc}
\begin{algorithmic}
\STATE{\textbf{Step 0: Preparation}}
\STATE{Discrete the meshes, take $\Delta x=O(\varepsilon),\Delta q=O(\sqrt{\varepsilon}),\Delta p=O(\sqrt{\varepsilon})$.}
\STATE{\textbf{Step 1: Initial decomposition}}
\STATE{For each $\bm{q}$, compute $A(0,\bm{q},\bm{p})$ using FFT algorithm by \Cref{eq:initial_decomposition}.}
\STATE{\textbf{Step 2: Time evolution}}
\STATE{For each wave-packet, evolve the ODE system \Cref{eq:2.3}$\sim$\Cref{eq:2.6} up to time $t$, and obtain the FGA variables $A,S,\bm{Q},\bm{P}$.}
\STATE{\textbf{Step 3: Least squares approximation}}
\STATE{Determine the neighbor strategy as in \Cref{eq:neighbor_representation} and \Cref{eq:k_th_neighbor}. For each wave-packet, solve the normal equations \Cref{eq:normal_equation} and obtain the LSA coefficients.}
\STATE{\textbf{Step 4: Wave reconstruction}}
\STATE{Compute the wave function based on \Cref{eq:wave_reconstruction1} and \Cref{eq:wave_reconstruction2}.}
\end{algorithmic}
\end{algorithm}

\newtheorem{remark}{Remark}
\begin{remark} 
    The transform in \Cref{eq:initial_decomposition} consists of a sequence of cut-off FFT. Consequently, one could take $\Delta p=O(\sqrt{\varepsilon})$ as the step size of the phase space mesh. In fact, by the stationary phase approximation, $A(0,\bm{q},\bm{p})$ is localized around the submanifold $\bm{p}=\nabla_{\bm{q}} S_0(\bm{q})$, which inspires one to put the mesh grids of $\bm{p}$ around $\nabla_{\bm{q}} S_0(\bm{q})$ to perform the initial decomposition. Furthermore, taking $\Delta q=O(\sqrt{\varepsilon})$ also ensures the accuracy of the wave function. 
\end{remark}

\begin{remark}
    Normal equations \Cref{eq:normal_equation} can be solved using the pseudo-inverse method. To be specific, consider a rank-deficient LSA problem $ \underset{x}{\text{min}}\|\bm{A}x-\bm{f}\|_2$, then $\bm{A}^\dagger\bm{f}$ is the solution which attains the minimum Euclidean norm among all possible solutions, where $\bm{A}^\dagger$ denotes the pseudo-inverse of $\bm{A}$ and can be obtained by SVD decomposition.
\end{remark}

\begin{remark} The FGGC algorithm is highly parallelizable. For $\textbf{Step 1}$, parallel computation can be performed in the $\bm{q}$ direction. Specifically, to obtain $A(0,\bm{q},\bm{p})$, the $\mathrm{d}\bm{y}$ integral \Cref{eq:initial_decomposition} can be computed separately (using FFT) for each $\bm{q}$. For $\textbf{Step 2}$ and $\textbf{Step 3}$, the evolution and least squares approximation of each wave-packet can also be carried out in parallel. For $\textbf{Step 4}$, the integral $\mathrm{d}\bm{p}$ \Cref{eq:wave_reconstruction1} can be computed separately (using FFT) for each $\bm{q}$, and then sum up to obtain $u(t,\bm{x})$.
\end{remark}

\section{Error Estimate and Efficiency Analysis for FGGC}\label{sec:3}

In this section, we analyze the error and efficiency of the FGGC algorithm. The $L^2$ error of the FGGC wave function $u_\text{fggc}$ with respect to the exact solution $u_\text{exact}$ can be decomposed as follows:
\begin{equation}\begin{aligned}
    \left\|u_\text{exact}-u_\text{fggc}\right\|_{L^2}\leq\left\|u_\text{exact}-u_\text{fga}\right\|_{L^2}+\left\|u_\text{fga}-u_\text{fggc}\right\|_{L^2}.
    \label{eq:3.1}
\end{aligned}\end{equation}
The following theorem shows that the asymptotic error of the FGA ansatz, i.e., the first term on the right-hand side of \Cref{eq:3.1}, is $O(\varepsilon)$:
\newtheorem{theorem}{Theorem}
\begin{theorem} \cite{FGAwave} \label{thm:fgaOeps}
    Assume the potential function $V(\bm{x})\in C^{\infty}(\mathbb{R}^\textup{d})$. Let $u_\textup{exact}$ be the solution of the semi-classical Schr{\"o}dinger equation \Cref{eq:1.1}--\Cref{eq:1.2}, and $u_\textup{FGA}$ be the FGA representation as \Cref{eq:2.1}. For any $t>0$, we have
    \begin{equation}\begin{aligned}
        \left\|u_\textup{exact}(t,\cdot)-u_\textup{fga}(t,\cdot)\right\|_{L^2}\leq C_1(t,d)\varepsilon\left\|u_0\right\|_{L^2},
        \label{eq:3.2}
    \end{aligned}\end{equation}
    where $C_1(t,d)$ is a constant independent of the semi-classical parameter $\varepsilon$.
\end{theorem}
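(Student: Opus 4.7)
The strategy is a Duhamel-type error estimate. Define the semi-classical Schr\"odinger operator $L := \mathrm{i}\varepsilon \partial_t + \frac{\varepsilon^2}{2}\Delta - V$ and let $r(t,\bm{x}) := L u_\text{FGA}(t,\bm{x})$ denote the residual obtained by substituting the ansatz \Cref{eq:2.1} into the equation. By \Cref{thm:a_initial_condition} the initial conditions match exactly, so $w := u_\text{exact} - u_\text{FGA}$ satisfies $Lw = -r$ with $w(0,\cdot) = 0$. Since the unperturbed Schr\"odinger propagator is unitary on $L^2(\mathbb{R}^d)$, a standard Duhamel argument yields
\begin{equation*}
\|w(t,\cdot)\|_{L^2} \leq \frac{1}{\varepsilon} \int_0^t \|r(s,\cdot)\|_{L^2}\, \mathrm{d}s,
\end{equation*}
so it suffices to show $\|r(s,\cdot)\|_{L^2} = O(\varepsilon^2)$ uniformly on $[0,t]$.

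The main calculation is applying $L$ under the integral sign in \Cref{eq:2.1}. Differentiating $A\, e^{\mathrm{i}\Phi/\varepsilon}$ produces negative powers of $\varepsilon$ multiplied by derivatives of $\Phi$ and $A$, which I would organize by order in $\varepsilon$. The $\varepsilon^0$ contribution cancels by the eikonal-type identity $\partial_t \Phi + \frac{1}{2}|\nabla_{\bm{x}} \Phi|^2 + V(\bm{x}) = 0$ evaluated on the trajectory $\bm{x} = \bm{Q}$; Taylor expanding $V$ about $\bm{Q}$ and using \Cref{eq:2.3}--\Cref{eq:2.5} reduces this to the Hamilton equations. The $\varepsilon^1$ contribution is arranged to vanish by the amplitude equation \Cref{eq:2.6}: the $\operatorname{tr}(\bm{Z}^{-1}\,\mathrm{d}\bm{Z}/\mathrm{d}t)$ term is chosen precisely to cancel the transport-plus-Laplacian residual coming from the Gaussian envelope, which is where the form of the $A$-ODE originates. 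What remains is an $O(\varepsilon^2)$ symbol involving third-order derivatives of $V$ and higher Taylor terms in $\bm{x} - \bm{Q}$, multiplied by the Gaussian factor in $\Phi$.

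To close the argument, I would bound the $L^2_{\bm{x}}$ norm of this remainder. Writing it as a phase-space integral of Gaussian wave-packets with a smooth symbol $a_\varepsilon(t,\bm{q},\bm{p})$ depending on the flow, I would invoke an $L^2$ bound for FGA-type oscillatory integrals of the form
\begin{equation*}
\left\|\frac{1}{(2\pi\varepsilon)^{3d/2}}\int_{\mathbb{R}^{2d}} a_\varepsilon(t,\bm{q},\bm{p})\, e^{\frac{\mathrm{i}}{\varepsilon}\Phi(t,\bm{x},\bm{q},\bm{p})}\, \mathrm{d}\bm{q}\,\mathrm{d}\bm{p} \right\|_{L^2_{\bm{x}}} \leq C(t,d)\, \|a_\varepsilon\|_{L^2_{\bm{q},\bm{p}}},
\end{equation*}
which follows from Plancherel together with the Gaussian decay of $\Phi$ in $\bm{x} - \bm{Q}$ (this is the wave-packet transform bound used in \cite{FGAwave}). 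Smoothness of $V$ and finite-time existence of the flow \Cref{eq:2.3}--\Cref{eq:2.6} control the symbol uniformly in $\varepsilon$, yielding $\|r(s,\cdot)\|_{L^2} \leq C_0(s,d)\, \varepsilon^2 \|u_0\|_{L^2}$; integrating in $s$ produces \Cref{eq:3.2}.

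The main obstacle is verifying the amplitude cancellation at order $\varepsilon^1$: the matrix identity built into \Cref{eq:2.6} must exactly eliminate the divergence-type residual arising from $\frac{\varepsilon^2}{2}\Delta_{\bm{x}}$ acting on $A\, e^{\mathrm{i}\Phi/\varepsilon}$, and checking this requires a careful phase-space computation using the algebraic structure of $\bm{Z} = \partial_{\bm{z}}(\bm{Q} + \mathrm{i}\bm{P})$ together with symplectic identities for the variational flow of the Hamiltonian system. Once this identity is in place, the remaining Gaussian bookkeeping is routine.
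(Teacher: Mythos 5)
The paper does not prove this theorem at all: it is quoted verbatim from the reference \cite{FGAwave} (Lu and Yang), so there is no in-paper proof to compare against. Judged against the argument in that reference, your outline is the right one at the top level --- Duhamel/energy estimate using self-adjointness of $-\tfrac{\varepsilon^2}{2}\Delta+V$ to reduce to an $O(\varepsilon^2)$ bound on the residual $Lu_{\mathrm{FGA}}$, exact matching of initial data via \Cref{thm:a_initial_condition}, and an $L^2$ boundedness estimate for the FGA oscillatory-integral operator to convert a symbol bound into an $L^2_{\bm{x}}$ bound.

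There is, however, one substantive gap in the middle step. You present the cancellation as happening order by order in $\varepsilon$ at the level of the integrand, with the amplitude ODE \Cref{eq:2.6} killing the $O(\varepsilon)$ term pointwise. That cannot work as stated: a direct computation of $L\bigl(Ae^{\mathrm{i}\Phi/\varepsilon}\bigr)$ leaves, after the eikonal/Hamiltonian cancellations, terms quadratic in $(\bm{x}-\bm{Q})$ --- namely $\tfrac{1}{2}|\bm{x}-\bm{Q}|^2$ from the imaginary part of the phase together with $-\tfrac12(\bm{x}-\bm{Q})^{T}\nabla^2V(\bm{Q})(\bm{x}-\bm{Q})$ from the Taylor expansion of $V$. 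These are $O(\varepsilon)$ in the Gaussian-weighted sense and depend on $\bm{x}$, while $A$ does not, so no choice of $\mathrm{d}A/\mathrm{d}t$ can cancel them pointwise. The essential device in \cite{FGAwave} is an integration-by-parts lemma in the phase-space variables $(\bm{q},\bm{p})$, which trades each factor of $(\bm{x}-\bm{Q})$ acting on $e^{\mathrm{i}\Phi/\varepsilon}$ for a factor of $\varepsilon\,\partial_{\bm{z}}$ acting on the remaining symbol; it is precisely this step that produces the $\varepsilon\,\mathrm{tr}\bigl(\bm{Z}^{-1}\mathrm{d}\bm{Z}/\mathrm{d}t\bigr)$ structure that \Cref{eq:2.6} is built to cancel, and that pushes the leftover terms to $O(\varepsilon^2)$. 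You gesture at this in your closing paragraph, but without the integration by parts the order-$\varepsilon$ cancellation you assert would fail. A secondary bookkeeping point: the operator bound you invoke must be stated with the correct $\varepsilon$-powers relating $\|a_\varepsilon\|_{L^2_{\bm{q},\bm{p}}}$ back to $\|u_0\|_{L^2}$ (the prefactor $(2\pi\varepsilon)^{-3d/2}$ is not innocuous), though this is routine once the residual is in the right form.
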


For the second term, we rewrite the difference as
\begin{equation}\begin{aligned}
    u_\text{fga}(t,\cdot)-u_\text{fggc}(t,\cdot)=\frac{(\Delta q\Delta p)^{d}}{(2\pi\varepsilon)^{\frac{3d}{2}}}\sum_{j=1}^{M}A_j\left(G_j(\bm{\cdot})-\sum_{k=1}^n c_j^{(k)}G_j^{(k)}(\bm{\cdot})\right).
    \label{eq:3.3}
\end{aligned}\end{equation}
A direct estimate of this expression is challenging. However, we can estimate the difference between a single wave-packet and its least squares approximation.

\subsection{Error estimate for the least squares approximation and numerical comparison of different strategies}\label{sec:3.1}

We denote the relative $L^2$ error of the least squares approximation for a single Gaussian wave-packet as
\begin{equation}\begin{aligned}
    E_\text{LSA}:=&\frac{\left\|G\left(\bm{x};\bm{Q},\bm{P}\right)-\sum_{k=1}^n c^{(k)}G\left(\bm{x};\bm{\mathcal{N}}(\bm{Q},\bm{P};k)\right)\right\|_{L^2}}{\left\|G(\bm{x};\bm{Q},\bm{P})\right\|_{L^2}}\\
    =&(\pi\varepsilon)^{-\frac{d}{4}}\left\|G(\bm{x};\bm{Q},\bm{P})-\sum_{k=1}^n c^{(k)}G\left(\bm{x};\bm{\mathcal{N}}(\bm{Q},\bm{P};k)\right)\right\|_{L^2}.
    \label{eq:E_LSA}
\end{aligned}\end{equation}
Intuitively, $E_\text{LSA}=E_\text{LSA}(\varepsilon,\bm{Q},\bm{P},\Delta q,\Delta p)$ depends on the following variables:

(1) The semi-classical parameter $\varepsilon$; 

(2) The coordinates of the target $(\bm{Q},\bm{P})$; 

(3) The neighbor strategy, including the number of neighbors $n$ and the shift integers $\delta q_l^{(k)}$ and $\delta p_l^{(k)}$ as in \Cref{eq:neighbor_representation}; 

(4) The size of the mesh steps $\Delta q$ and $\Delta p$. 

We adopt the following conventions for the “mesh strategy” and the “neighbor strategy”: The mesh strategy is considered fixed if 
\begin{equation}\begin{aligned}
    \Delta q = C_q\sqrt{\varepsilon},\ \Delta p = C_p\sqrt{\varepsilon},
    \label{eq:3.5}
\end{aligned}\end{equation}
for some constants $C_q$ and $C_p$. The neighbor strategy is considered fixed if the number of neighbors $n$ and the sequences of shift integers $\bm{\delta q}^{(k)}$ and $\bm{\delta p}^{(k)}$ for $k=1,2,\cdots,n$ are all given. To study the properties of $E_\text{LSA}(\varepsilon,\bm{Q},\bm{P},\Delta q,\Delta p)$, we introduce more notation.

\textbf{Notations.} Denote
\begin{equation}\begin{aligned}
    \operatorname{int}\bm{Q}=\operatorname{floor}\left(\frac{\bm{Q}}{\Delta q}\right)\Delta q,\ \operatorname{int}\bm{P}=\operatorname{floor}\left(\frac{\bm{P}}{\Delta p}\right)\Delta p,
    \label{eq:3.6}
\end{aligned}\end{equation}
and 
\begin{equation}\begin{aligned}
    \operatorname{frac}\bm{Q}=\bm{Q}-\operatorname{int}\bm{Q},\ \operatorname{frac}\bm{P}=\bm{P}-\operatorname{int}\bm{P},
    \label{eq:3.7}
\end{aligned}\end{equation}
where $\operatorname{floor}\left(\cdot\right)$ represents the floor function. Therefore, the phase space coordinate $(\bm{Q},\bm{P})$ is decomposed into an ``integer part'' and a ``fractional part'' where the fractional part always falls within the range $[0,\Delta q)^d\times[0,\Delta p)^d$. We refer to this range as the ``unit hypercube'' in the following text.

Applying transformations to \Cref{eq:E_LSA} leads to the following theorem:

\begin{theorem}\label{thm:LSA_in_one_grid}
    Suppose that the mesh strategy and the neighbor strategy are fixed. Then, for the relative LSA error, we have 
    \begin{equation}\begin{aligned}
        E_\text{LSA}\left(\varepsilon,\bm{Q},\bm{P},\Delta q,\Delta p\right) = E_\text{LSA}\left(\varepsilon,\operatorname{frac}\bm{Q},\operatorname{frac}\bm{P},\Delta q,\Delta p\right).
        \label{eq:3.8}
    \end{aligned}\end{equation}
\end{theorem}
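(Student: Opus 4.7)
\bigskip

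\noindent\emph{Proof proposal.} The plan is to reduce the case $(\bm{Q},\bm{P})$ to the case $(\operatorname{frac}\bm{Q},\operatorname{frac}\bm{P})$ by the change of variable $\bm{y}=\bm{x}-\operatorname{int}\bm{Q}$ together with the decomposition $\bm{P}=\operatorname{int}\bm{P}+\operatorname{frac}\bm{P}$. The key observation is that under this shift, both the target Gaussian $G(\bm{x};\bm{Q},\bm{P})$ and every neighbor $G(\bm{x};\bm{Q}^{(k)},\bm{P}^{(k)})$ acquire a common modulation factor $e^{\frac{\mathrm{i}}{\varepsilon}\operatorname{int}\bm{P}\cdot\bm{y}}$ of unit modulus, while the remaining Gaussian parts are precisely the target and neighbors associated with the fractional coordinates. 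Any extra scalar phase factors that differ between $k$ can then be absorbed into a redefinition of the LSA coefficients.

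First I would verify that the neighbor map is compatible with taking fractional parts: since $\operatorname{frac}\bm{Q}\in[0,\Delta q)^d$ and $\operatorname{frac}\bm{P}\in[0,\Delta p)^d$, we have $\operatorname{floor}(\operatorname{frac}\bm{Q}/\Delta q)=0$ and $\operatorname{floor}(\operatorname{frac}\bm{P}/\Delta p)=0$, so the neighbors of the fractional target are simply $\bm{\delta q}^{(k)}\Delta q$ and $\bm{\delta p}^{(k)}\Delta p$. Next I would carry out the substitution in the Gaussian formula \Cref{eq:gaussian_wave_packet}: direct computation gives
\begin{equation*}
G(\bm{x};\bm{Q},\bm{P})=e^{\frac{\mathrm{i}}{\varepsilon}\operatorname{int}\bm{P}\cdot\bm{y}}\,e^{-\frac{\mathrm{i}}{\varepsilon}\operatorname{int}\bm{P}\cdot\operatorname{frac}\bm{Q}}\,G(\bm{y};\operatorname{frac}\bm{Q},\operatorname{frac}\bm{P}),
\end{equation*}
and analogously, for each $k$,
\begin{equation*}
G(\bm{x};\bm{Q}^{(k)},\bm{P}^{(k)})=e^{\frac{\mathrm{i}}{\varepsilon}\operatorname{int}\bm{P}\cdot\bm{y}}\,e^{-\frac{\mathrm{i}}{\varepsilon}\operatorname{int}\bm{P}\cdot\bm{\delta q}^{(k)}\Delta q}\,G\!\left(\bm{y};\bm{\delta q}^{(k)}\Delta q,\bm{\delta p}^{(k)}\Delta p\right).
\end{equation*}
The common factor $e^{\frac{\mathrm{i}}{\varepsilon}\operatorname{int}\bm{P}\cdot\bm{y}}$ has modulus one and therefore drops out upon taking $L^2$ norms, and the change of variable $\bm{y}=\bm{x}-\operatorname{int}\bm{Q}$ is an isometry of $L^2(\mathbb{R}^d)$.

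I would then set $\tilde c^{(k)}=c^{(k)}\,e^{-\frac{\mathrm{i}}{\varepsilon}\operatorname{int}\bm{P}\cdot(\bm{\delta q}^{(k)}\Delta q-\operatorname{frac}\bm{Q})}$, so that the numerator of $E_\text{LSA}$ for $(\bm{Q},\bm{P})$ becomes exactly the numerator for $(\operatorname{frac}\bm{Q},\operatorname{frac}\bm{P})$ with coefficients $\tilde c^{(k)}$. Since the map $c^{(k)}\leftrightarrow\tilde c^{(k)}$ is a bijection of $\mathbb{C}^n$, minimizing over either set yields the same optimal $L^2$ error. The denominator $\|G(\bm{x};\bm{Q},\bm{P})\|_{L^2}=(\pi\varepsilon)^{d/4}$ is independent of $(\bm{Q},\bm{P})$ and hence identical on the two sides, which gives the desired equality of relative LSA errors.

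The only delicate step is the bookkeeping of phases: one must check carefully that the phase factors that differ between the target and its various neighbors can all be grouped into a single $k$-dependent scalar that modifies $c^{(k)}$ without touching the Gaussian profile, so that the minimization problem is truly unchanged. Beyond this, all ingredients are either the inner-product formula \Cref{eq:inner_product} and translation-invariance of $L^2$, both of which are routine.
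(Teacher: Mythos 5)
Your proof is correct and follows the same essential route as the paper's: translate $\bm{x}$ by $\operatorname{int}\bm{Q}$, split $\bm{P}=\operatorname{int}\bm{P}+\operatorname{frac}\bm{P}$, and observe that the target and all of its neighbors pick up a common unimodular modulation $e^{\frac{\mathrm{i}}{\varepsilon}\operatorname{int}\bm{P}\cdot\bm{y}}$ together with $k$-dependent unimodular scalars that can be absorbed into the coefficients. The one place you genuinely diverge is in how the coefficient correspondence is justified: the paper transforms the normal equations explicitly, passing through \Cref{eq:A_tilde_and_A}--\Cref{eq:shift_coefficient1}, and then substitutes the resulting relation into the norm computation \Cref{eq:3.14}, whereas you argue variationally that the unimodular rescaling $c^{(k)}\mapsto\tilde c^{(k)}$ is a bijection of $\mathbb{C}^n$, so the two least-squares problems have the same minimum value. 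Your route is slightly more economical --- it needs only the variational characterization of $E_\text{LSA}$ and is insensitive to whether the normal equations are uniquely solvable --- while the paper's route has the side benefit that the explicit relation \Cref{eq:shift_coefficient1} is reused in \Cref{sec:3.2} to precompute $\widetilde{\bm{A}}^\dagger$. One small discrepancy worth flagging: the phase in your coefficient relation is the complex conjugate of the one in \Cref{eq:shift_coefficient1}; this traces back to the ordering of arguments in the paper's normal equations \Cref{eq:normal_equation_Af} (the conjugation in \Cref{eq:2.14} sits on the second slot), and since the factor is unimodular either way it has no bearing on the equality of the errors.
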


\begin{proof}
    Denote $\left\{c^{(k)}\right\}_{k=1}^n$ as the LSA coefficients with respect to $\left(\bm{Q},\bm{P}\right)$ and $\left\{\widetilde{c}^{(k)}\right\}_{k=1}^n$ as that of $\left(\operatorname{frac}\bm{Q},\operatorname{frac}\bm{P}\right)$. Recall that $\left\{c^{(k)}\right\}_{k=1}^n$ satisfy the normal equations \Cref{eq:normal_equation} and \Cref{eq:normal_equation_Af}, while $\left\{\widetilde{c}^{(k)}\right\}_{k=1}^n$ satisfy
    \begin{equation}\begin{aligned}
        &\sum_{k=1}^n \widetilde{A}_{jk}\widetilde{c}^{(k)}=\widetilde{f}_j\ ,j=1,2,\cdots,n,\\
        \widetilde{A}_{jk}=&\left< G(\bm{x};\bm{\mathcal{N}}(\bm{0},\bm{0};j)),G(\bm{x};\bm{\mathcal{N}}(\bm{0},\bm{0};k))\right> ,\\
        \widetilde{f}_j=&\left<G(\bm{x};\bm{\mathcal{N}}(\bm{0},\bm{0};j)),G(\bm{x};\operatorname{frac}\bm{Q},\operatorname{frac}\bm{P})\right>.
        \label{eq:shift_normal_equation}
    \end{aligned}\end{equation}
    Using \Cref{eq:inner_product}, one gets
    \begin{equation}\begin{aligned}   
        \widetilde{A}_{jk}=A_{jk}\operatorname{exp}\left(-\frac{\mathrm{i}}{\varepsilon}\left(\bm{Q}^{(k)}-\bm{Q}^{(j)}\right)\cdot\operatorname{int}\bm{P}\right),
        \label{eq:A_tilde_and_A}
    \end{aligned}\end{equation}
    and
    \begin{equation}\begin{aligned}   
        \widetilde{f}_{j}=f_{j}\operatorname{exp}\left(-\frac{\mathrm{i}}{\varepsilon}\left(\bm{Q}-\bm{Q}^{(j)}\right)\cdot\operatorname{int}\bm{P}\right).
        \label{eq:f_tilde_and_f}
    \end{aligned}\end{equation}
    So, \Cref{eq:shift_normal_equation} becomes
    \begin{equation}\begin{aligned}   
        \sum_{k=1}^n A_{jk}\operatorname{exp}\left(\frac{\mathrm{i}}{\varepsilon}\left(\bm{Q}-\bm{Q}^{(k)}\right)\cdot\operatorname{int}\bm{P}\right)\widetilde{c}^{(k)}=f_j\ ,j=1,2,\cdots,n.
        \label{eq:3.12}
    \end{aligned}\end{equation}
    Therefore,
    \begin{equation}\begin{aligned}
        c^{(k)}=\widetilde{c}^{(k)}\operatorname{exp}\left(\frac{\mathrm{i}}{\varepsilon}\left(\bm{Q}-\bm{Q}^{(k)}\right)\cdot\operatorname{int}\bm{P}\right).
        \label{eq:shift_coefficient1}
    \end{aligned}\end{equation}    
    Next, perform a change of variable $\bm{x}\to\bm{x}+\operatorname{int}\bm{Q}$, which is equivalent to replacing $\bm{Q}$ with $\operatorname{frac}\bm{Q}$, we have 
    \begin{equation}\begin{aligned}
        &E_\text{LSA}(\varepsilon,\bm{Q},\bm{P},\Delta q,\Delta p) \,(\pi\varepsilon)^{\frac{d}{4}}\\
        =&\left\|G\left(\bm{x};\operatorname{frac}\bm{Q},\bm{P}\right)-\sum_{k=1}^n c^{(k)}G\left(\bm{x};\bm{\mathcal{N}}(\operatorname{frac}\bm{Q},\bm{P};k)\right)\right\|_{L^2}\\
        =&\left\|G(\bm{x};\operatorname{frac}\bm{Q},\operatorname{frac}\bm{P})e^{\frac{\mathrm{i}}{\varepsilon}(\bm{x}-\operatorname{frac}\bm{Q})\cdot\operatorname{int}\bm{P}}-\sum_{k=1}^n c^{(k)}G\left(\bm{x};\bm{\mathcal{N}}(\operatorname{frac}\bm{Q},\bm{P};k)\right)\right\|_{L^2}\\
        =&\left\|G(\bm{x};\operatorname{frac}\bm{Q},\operatorname{frac}\bm{P})-\sum_{k=1}^n c^{(k)}e^{-\frac{\mathrm{i}}{\varepsilon}(\bm{x}-\operatorname{frac}\bm{Q})\cdot\operatorname{int}\bm{P}}G\left(\bm{x};\bm{\mathcal{N}}(\operatorname{frac}\bm{Q},\bm{P};k)\right)\right\|_{L^2}\\
        =&\left\|G(\bm{x};\operatorname{frac}\bm{Q},\operatorname{frac}\bm{P})-\sum_{k=1}^n \widetilde{c}^{(k)}G\left(\bm{x};\bm{\mathcal{N}}(\operatorname{frac}\bm{Q},\operatorname{frac}\bm{P};k)\right)\right\|_{L^2}\\
        =&E_\text{LSA}(\varepsilon,\operatorname{frac}\bm{Q},\operatorname{frac}\bm{P},\Delta q,\Delta p)\,(\pi\varepsilon)^{\frac{d}{4}}.
        \label{eq:3.14}
    \end{aligned}\end{equation}
   Then the statement is proved.
\end{proof}

Using this theorem, the domain of the phase space coordinate can be restricted in the “unit hypercube” when considering the supremum of the relative error, i.e.
\begin{equation}\begin{aligned}
    &\sup_{(\bm{Q},\bm{P})\in\mathbb{R}^{2d}}E_\text{LSA}(\varepsilon,\bm{Q},\bm{P},\Delta q,\Delta p)\\=&\sup_{(\bm{Q},\bm{P})\in\mathbb{R}^{2d}}E_\text{LSA}(\varepsilon,\operatorname{frac}\bm{Q},\operatorname{frac}\bm{P},\Delta q,\Delta p)\\
    =&\sup_{(\bm{Q},\bm{P})\in[0,\Delta q]^d\times[0,\Delta p]^d}E_\text{LSA}(\varepsilon,\bm{Q},\bm{P},\Delta q,\Delta p).
    \label{eq:3.15}
\end{aligned}\end{equation}

Next, we investigate the relationship between $E_\text{LSA}$ and the semi-classical parameter $\varepsilon$.

\begin{theorem}\label{thm:eps_irrelevant}
    Assume that both the mesh strategy and the neighbor strategy are fixed. Then for $\forall \varepsilon_1,\varepsilon_2>0$, we have:
    \begin{equation}\begin{aligned}
        &\sup_{(\bm{Q},\bm{P})\in[0,\Delta q_1]^d\times[0,\Delta p_1]^d}E_\text{LSA}(\varepsilon_1,\bm{Q},\bm{P},\Delta q_1,\Delta p_1)\\
        =&\sup_{(\bm{Q},\bm{P})\in[0,\Delta q_2]^d\times[0,\Delta p_2]^d}E_\text{LSA}(\varepsilon_2,\bm{Q},\bm{P},\Delta q_2,\Delta p_2),
        \label{eq:3.16}
    \end{aligned}\end{equation}
    where $\Delta q_1=C_q\sqrt{\varepsilon_1}$, $\Delta p_1=C_p\sqrt{\varepsilon_1}$ and $\Delta q_2=C_q\sqrt{\varepsilon_2}$, $\Delta p_2=C_p\sqrt{\varepsilon_2}$.
\end{theorem}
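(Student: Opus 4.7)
The plan is to reduce the $\varepsilon$-dependent problem to a universal, $\varepsilon$-free unit-scale problem by a semi-classical rescaling. Concretely, I would perform the change of variables $\bm{x} = \sqrt{\varepsilon}\,\bm{y}$, together with the phase-space rescaling $\bm{Q}' = \bm{Q}/\sqrt{\varepsilon}$, $\bm{P}' = \bm{P}/\sqrt{\varepsilon}$. Substituting into the definition \eqref{eq:gaussian_wave_packet}, the exponent collapses to $\mathrm{i}\bm{P}'\cdot(\bm{y}-\bm{Q}') - \tfrac{1}{2}|\bm{y}-\bm{Q}'|^{2}$, so that $G(\sqrt{\varepsilon}\bm{y};\sqrt{\varepsilon}\bm{Q}',\sqrt{\varepsilon}\bm{P}',\varepsilon) = G(\bm{y};\bm{Q}',\bm{P}',1)$. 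The entire $\varepsilon$-dependence is therefore absorbed into the rescaling.

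Next I would track how the neighbor strategy transforms. Because $\Delta q = C_q\sqrt{\varepsilon}$ and $\Delta p = C_p\sqrt{\varepsilon}$, the floor operations in \eqref{eq:neighbor_representation} commute with the rescaling, and the neighbor map $\bm{\mathcal{N}}(\bm{Q},\bm{P};k)$ with step $(\Delta q,\Delta p)$ corresponds after rescaling exactly to $\bm{\mathcal{N}}(\bm{Q}',\bm{P}';k)$ with step $(C_q,C_p)$ using the same shift integers. I would then verify invariance of the LSA coefficients: by the closed form \eqref{eq:inner_product}, both $A_{jk}$ and $f_j$ in the normal equations \eqref{eq:normal_equation} pick up a common factor $(\pi\varepsilon)^{d/2}$ relative to their unit-scale counterparts, so the coefficients $c^{(k)}$ coincide with those of the rescaled problem at $(\bm{Q}',\bm{P}')$ with mesh $(C_q,C_p)$.

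Finally, applying the same change of variables to the $L^{2}$ residual in the definition \eqref{eq:E_LSA} of $E_\text{LSA}$ produces a Jacobian $\varepsilon^{d/2}$ inside the squared norm, i.e.\ a factor $\varepsilon^{d/4}$ after taking the root. This is precisely cancelled by the normalization prefactor $(\pi\varepsilon)^{-d/4}$, leaving
\[
E_\text{LSA}(\varepsilon,\bm{Q},\bm{P},\Delta q,\Delta p) \;=\; E_\text{LSA}\!\left(1,\; \bm{Q}/\sqrt{\varepsilon},\; \bm{P}/\sqrt{\varepsilon},\; C_q,\; C_p\right).
\]
The supremum over $[0,\Delta q]^{d}\times[0,\Delta p]^{d}$ on the left becomes the supremum over $[0,C_q]^{d}\times[0,C_p]^{d}$ on the right, a region independent of $\varepsilon$. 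Applied at $\varepsilon_1$ and at $\varepsilon_2$, both sides of \eqref{eq:3.16} reduce to the same unit-scale quantity, proving equality.

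The one place where I would be most careful is the bookkeeping showing that $c^{(k)}$ is genuinely invariant under the rescaling, so that the same linear combination appears in both the $\varepsilon_{1}$ and $\varepsilon_{2}$ residuals. I do not foresee a real technical obstacle: once the common factor $(\pi\varepsilon)^{d/2}$ is identified on both sides of the normal equations, the rest is a direct substitution, and the argument goes through even when $\bm{A}$ is rank-deficient provided one uses the pseudo-inverse (as in Remark~2), since the pseudo-inverse itself transforms consistently under the scalar rescaling of $\bm{A}$ and $\bm{f}$.
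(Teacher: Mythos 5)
Your proposal is correct and follows essentially the same route as the paper: rescale $\bm{x}=\sqrt{\varepsilon}\,\bm{y}$, use $G(\sqrt{\varepsilon}\bm{y};\bm{Q},\bm{P},\varepsilon)=G(\bm{y};\bm{Q}/\sqrt{\varepsilon},\bm{P}/\sqrt{\varepsilon},1)$, check that the neighbor map scales consistently because $\Delta q=C_q\sqrt{\varepsilon}$, $\Delta p=C_p\sqrt{\varepsilon}$, and observe that the normal equations become $\varepsilon$-free (the paper writes out the entries explicitly, which amounts to your common-factor $(\pi\varepsilon)^{d/2}$ cancellation), so the coefficients and the normalized residual coincide with the unit-scale problem. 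The only cosmetic difference is that the paper compares $\varepsilon_1$ and $\varepsilon_2$ via the parametrization $(\bm{Q},\bm{P})=(\Delta q\,\bm{s},\Delta p\,\bm{t})$ rather than reducing each to $\varepsilon=1$.
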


\begin{proof}
    It suffices to prove that
    \begin{equation}\begin{aligned}
        E_\text{LSA}(\varepsilon_1,\Delta q_1\bm{s},\Delta p_1\bm{t},\Delta q_1,\Delta p_1)=E_\text{LSA}(\varepsilon_2,\Delta q_2\bm{s},\Delta p_2\bm{t},\Delta q_2,\Delta p_2)
        \label{eq:3.17}
    \end{aligned}\end{equation}
    holds for $\forall (\bm{s},\bm{t})\in[0,1]^d\times[0,1]^d$.
    
    A change of variable $\bm{y}=\frac{\bm{x}}{\sqrt{\varepsilon}}$ in \Cref{eq:E_LSA} yields the LSA error:
    \begin{equation}\begin{aligned}
        &E_\text{LSA}(\varepsilon_1,\Delta q_1\bm{s},\Delta p_1\bm{t},\Delta q_1,\Delta p_1)\\
        &=\pi^{\frac{d}{4}}\left\|G\left(\sqrt{\varepsilon_1}\bm{y};\Delta q_1\bm{s},\Delta p_1\bm{t},\varepsilon_1\right)-\sum_{k=1}^n c_1^{(k)}G\left(\sqrt{\varepsilon_1}\bm{y};\bm{\mathcal{N}}(\Delta q_1\bm{s},\Delta p_1\bm{t};k),\varepsilon_1\right)\right\|_{L^2},
        \label{eq:3.18}
    \end{aligned}\end{equation}
    where $\bm{c}_1$ are the corresponding LSA coefficients .
    
    By \Cref{eq:gaussian_wave_packet}, it holds that 
    \begin{equation}\begin{aligned}
        G\left(\sqrt{\varepsilon}\bm{y};\bm{Q},\bm{P},\varepsilon\right) = G\left(\bm{y};\frac{\bm{Q}}{\sqrt{\varepsilon}},\frac{\bm{P}}{\sqrt{\varepsilon}},1\right).
        \label{eq:3.19}
    \end{aligned}\end{equation}
    Therefore,  
    \begin{equation}\begin{aligned}
        E_\text{LSA}&(\varepsilon_1,\Delta q_1\bm{s},\Delta p_1\bm{t},\Delta q_1,\Delta p_1)\\
        &=\pi^{\frac{d}{4}}\left\|G\left(\bm{y};C_q\bm{s},C_p\bm{t},1\right)-\sum_{k=1}^n c_1^{(k)}G\left(\bm{y};\bm{\mathcal{N}}\left(C_q\bm{s},C_p\bm{t};k\right),1\right)\right\|_{L^2}.
        \label{eq:3.20}
    \end{aligned}\end{equation}
    Similarly, 
    \begin{equation}\begin{aligned}
        E_\text{LSA}&(\varepsilon_2,\Delta q_2\bm{s},\Delta p_2\bm{t},\Delta q_2,\Delta p_2)\\
        &=\pi^{\frac{d}{4}}\left\|G\left(\bm{y};C_q\bm{s},C_p\bm{t},1\right)-\sum_{k=1}^n c_2^{(k)}G\left(\bm{y};\bm{\mathcal{N}}\left(C_q\bm{s},C_p\bm{t};k\right),1\right)\right\|_{L^2},
        \label{eq:3.21}
    \end{aligned}\end{equation}
    where $\bm{c}_2$ are the corresponding LSA coefficients.
    
    By comparing \Cref{eq:3.20} and \Cref{eq:3.21}, it suffices to show that $\bm{c}_1=\bm{c}_2$. Substituting the neighbor representation \Cref{eq:neighbor_representation} into the normal equations \Cref{eq:normal_equation} and \Cref{eq:normal_equation_Af}, one obtains the coefficient matrix of the normal equations for $\bm{c}_1$:
    \begin{equation}\begin{aligned}
        A_{jk}=\operatorname{exp}\bigg(&-\frac{C_q^2}{4}\left|\bm{\delta q}^{(j)}-\bm{\delta q}^{(k)}\right|^2-\frac{C_p^2}{4}\left|\bm{\delta p}^{(j)}-\bm{\delta p}^{(k)}\right|^2 \\
        &+\frac{\mathrm{i}C_qC_p}{2}\left(\bm{\delta q}^{(k)}-\bm{\delta q}^{(j)}\right)\cdot\left(\bm{\delta p}^{(j)}+\bm{\delta p}^{(k)}\right)\bigg),
        \label{eq:Ajk_no_epsilon}
    \end{aligned}\end{equation}
    and the right-hand side vector:
    \begin{equation}\begin{aligned}
        f_j=\operatorname{exp}\bigg(&-\frac{C_q^2}{4}\left|\bm{\delta q}^{(j)}-\bm{s}\right|^2-\frac{C_p^2}{4}\left|\bm{\delta p}^{(j)}-\bm{t}\right|^2 \\
        &+\frac{\mathrm{i}C_q C_p}{2}\left(\bm{s}-\bm{\delta q}^{(j)}\right)\cdot\left(\bm{\delta p}^{(j)}+\bm{t}\right)\bigg).
        \label{eq:fj_no_epsilon}
    \end{aligned}\end{equation}
    By performing similar derivations, it is obvious that the normal equations corresponding to $\bm{c}_2$ have exactly the same expressions as \Cref{eq:Ajk_no_epsilon} and \Cref{eq:fj_no_epsilon}, since the above two equations do not contain explicit dependence on $\varepsilon_1$. Consequently, it follows that $\bm{c}_1=\bm{c}_2$, which validates \Cref{eq:3.17}, thus proving the statement.
\end{proof}

Once the discrete mesh strategy and the neighbor strategy are determined, denote the supremum of the relative LSA error $E_\text{LSA}$ over all values of $\varepsilon$ and all wave-packets as
\begin{equation}\begin{aligned}
    M_\text{LSA}:=&\sup_{\substack{\varepsilon>0\\(\bm{Q},\bm{P})\in\mathbb{R}^{2d}}}E_\text{LSA}\left(\varepsilon,\bm{Q},\bm{P},\Delta q,\Delta p\right),
    \label{eq:M_LSA_def}
\end{aligned}\end{equation}
where $\Delta q=C_q\sqrt{\varepsilon}$ and $\Delta p=C_p\sqrt{\varepsilon}$. Then \Cref{thm:LSA_in_one_grid} and \Cref{thm:eps_irrelevant} bring us closer to achieving our goal of determining $M_\text{LSA}$:
\begin{equation}\begin{aligned}
    M_\text{LSA}=&\sup_{\substack{\varepsilon>0 \\(\bm{Q},\bm{P})\in[0,\Delta q]^d\times[0,\Delta p]^d}} E_\text{LSA}\left(\varepsilon,\bm{Q},\bm{P},\Delta q,\Delta p\right)\\
    =&\sup_{(\bm{Q},\bm{P})\in[0,\Delta q]^d\times[0,\Delta p]^d}E_\text{LSA}\left(\varepsilon_0,\bm{Q},\bm{P},\Delta q,\Delta p\right)
    \label{eq:M_LSA}
\end{aligned}\end{equation}
where $\varepsilon_0>0$ can be chosen arbitrarily. It is clear that $M_\text{LSA}$ is bounded since the supremum is taken over a compact set with respect to $(\bm{Q},\bm{P})$, and depends only on the mesh strategy and the neighbor strategy. At this point, we can assert that the least squares approximation error for a single wave-packet is independent of $\varepsilon$ under the determined mesh strategy and the neighbor strategy.

We then proceed to investigate the numerical behavior of $M_\text{LSA}$ in various mesh strategies and neighbor strategies. Take $d=1$, $\varepsilon=1/64$, $\Delta q=2\sqrt{\varepsilon}$, $\sqrt{\varepsilon}$, $\sqrt{\varepsilon}/2$, $\sqrt{\varepsilon}/4$, $\Delta p=\pi\sqrt{\varepsilon}/2$, $\pi\sqrt{\varepsilon}/4$, $\pi\sqrt{\varepsilon}/8$, $\pi\sqrt{\varepsilon}/16$.  Consider several different neighbor strategies as illustrated in \Cref{fig:3.2neighbors}.  We refer to these strategies as \textbf{Q2P2}, \textbf{Q2P4}, \textbf{Q4P2}, and \textbf{Q4P4}, where \textbf{Q$m$P$n$} denotes a tensor grid set with $m$ nearest $\bm{Q}$ neighbors and $n$ nearest $\bm{P}$ neighbors. 

We compute \Cref{eq:M_LSA} numerically for different strategies, and the results are shown in Tables~\ref{tab:Q2P2}--\ref{tab:Q4P4}. It can be observed that both decreasing the step size and increasing the number of neighbors contribute to a reduction in the LSA error, which is consistent with intuitive expectations. 

\begin{figure}[htbp]
    \centering
    \includegraphics[width=1\linewidth]{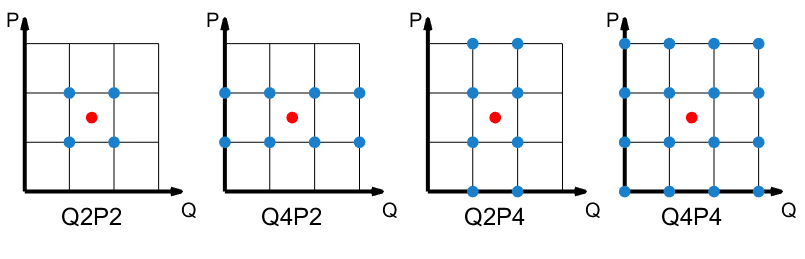}
    \caption{Four commonly used neighbor strategies in one dimension. Red: the target wave-packet. Blue: the neighboring wave-packets to approximate the target wave-packet.}
    \label{fig:3.2neighbors}
\end{figure}

\begin{table}[htbp]
    \centering
    \begin{minipage}{0.95\textwidth}
        \centering
        \begin{tabular}{|c|c|c|c|c|}
            \hline
            \diagbox{$dp$}{$dq$} & $2\sqrt{\varepsilon}$ & $\sqrt{\varepsilon}$ & $\sqrt{\varepsilon}/2$ & $\sqrt{\varepsilon}/4$ \\ \hline
            $\pi\sqrt{\varepsilon}/2$  & 1.540E-01 & 4.815E-02 & 2.672E-02 & 2.112E-02 \\ \hline
            $\pi\sqrt{\varepsilon}/4$  & 7.831E-02 & 1.072E-02 & 3.141E-03 & 1.699E-03 \\ \hline
            $\pi\sqrt{\varepsilon}/8$  & 5.705E-02 & 5.111E-03 & 6.859E-04 & 1.982E-04 \\ \hline
            $\pi\sqrt{\varepsilon}/16$ & 5.159E-02 & 3.669E-03 & 3.217E-04 & 4.312E-05 \\ \hline
        \end{tabular}
        \caption{Values of $M_\text{LSA}$ under the neighbor strategy \textbf{Q2P2}.}
        \label{tab:Q2P2}
    \end{minipage}
    
    \vspace{1em} 
    \begin{minipage}{0.95\textwidth}
        \centering
        \begin{tabular}{|c|c|c|c|c|}
            \hline
            \diagbox{$dp$}{$dq$} & $2\sqrt{\varepsilon}$ & $\sqrt{\varepsilon}$ & $\sqrt{\varepsilon}/2$ & $\sqrt{\varepsilon}/4$ \\ \hline
            $\pi\sqrt{\varepsilon}/2$  & 3.447E-02 & 9.531E-03 & 4.683E-03 & 3.518E-03 \\ \hline
            $\pi\sqrt{\varepsilon}/4$  & 2.722E-03 & 1.728E-04 & 4.318E-05 & 2.071E-05 \\ \hline
            $\pi\sqrt{\varepsilon}/8$  & 6.586E-04 & 1.093E-05 & 6.700E-06 & 1.629E-06 \\ \hline
            $\pi\sqrt{\varepsilon}/16$ & 3.819E-04 & 6.017E-06 & 5.660E-06 & 1.165E-06 \\ \hline
        \end{tabular}
        \caption{Values of $M_\text{LSA}$ under the neighbor strategy \textbf{Q4P2}.}
        \label{tab:Q4P2}
    \end{minipage}
    
    \vspace{1em} 
    \begin{minipage}{0.95\textwidth}
        \centering
        \begin{tabular}{|c|c|c|c|c|}
            \hline
            \diagbox{$dp$}{$dq$} & $2\sqrt{\varepsilon}$ & $\sqrt{\varepsilon}$ & $\sqrt{\varepsilon}/2$ & $\sqrt{\varepsilon}/4$ \\ \hline
            $\pi\sqrt{\varepsilon}/2$  & 7.320E-02 & 2.183E-03 & 2.297E-04 & 7.624E-05 \\ \hline
            $\pi\sqrt{\varepsilon}/4$  & 3.114E-02 & 4.129E-04 & 8.925E-06 & 4.336E-06 \\ \hline
            $\pi\sqrt{\varepsilon}/8$  & 2.159E-02 & 1.653E-04 & 1.262E-05 & 7.480E-06 \\ \hline
            $\pi\sqrt{\varepsilon}/16$ & 1.928E-02 & 1.130E-04 & 4.329E-06 & 2.171E-06 \\ \hline
        \end{tabular}
        \caption{Values of $M_\text{LSA}$ under the neighbor strategy \textbf{Q2P4}.}
        \label{tab:Q2P4}
    \end{minipage}
    
    \vspace{1em} 
    \begin{minipage}{0.95\textwidth}
        \centering
        \begin{tabular}{|c|c|c|c|c|}
            \hline
            \diagbox{$dp$}{$dq$} & $2\sqrt{\varepsilon}$ & $\sqrt{\varepsilon}$ & $\sqrt{\varepsilon}/2$ & $\sqrt{\varepsilon}/4$ \\ \hline
            $\pi\sqrt{\varepsilon}/2$  & 2.054E-03 & 1.326E-05 & 2.365E-05 & 1.178E-05 \\ \hline
            $\pi\sqrt{\varepsilon}/4$  & 6.237E-05 & 1.385E-05 & 1.663E-06 & 3.004E-06 \\ \hline
            $\pi\sqrt{\varepsilon}/8$  & 1.719E-05 & 4.352E-06 & 1.125E-06 & 7.155E-07 \\ \hline
            $\pi\sqrt{\varepsilon}/16$ & 1.596E-04 & 1.734E-05 & 1.428E-06 & 2.181E-07 \\ \hline
        \end{tabular}
        \caption{Values of $M_\text{LSA}$ under the neighbor strategy \textbf{Q4P4}.}
        \label{tab:Q4P4}
    \end{minipage}
\end{table}

\begin{remark}
    The mesh strategy and the neighbor strategy are critical to the accuracy-efficiency trade-off. Taking more neighbor points or finer meshes may lead to higher accuracy, but also cost relatively more computational resources. For the mesh strategy, we recommend adopting $\Delta q=\sqrt{\varepsilon}/2,\Delta p=\pi\sqrt{\varepsilon}/8$, which is empirically validated, feasible, and achieves sufficient accuracy. For the neighbor strategy, the number of neighbors in the above examples varies from $2^{2d}$ to $2^{4d}$, which is a sufficiently large number when $d$ grows. We suggest that one could take the mesh strategy \textbf{Q2P2} when $d=3$, at which point the computational complexity will be high and the accuracy requirement can be slightly lowered, and take \textbf{Q4P2} when $d=1,2$, since the LSA error converges rapidly and is negligible under such a strategy. As for higher-dimensional cases, one should elaborately design the neighbor strategy to balance the efficiency and accuracy, considering that the number of neighbors introduced here grows exponentially, and many of the neighbors may not contribute to improving the error of the least square approximation. For example, one may select only the nearest several neighbor grid points to perform the least square approximation.
\end{remark}

\begin{remark}
    The $L^2$ error estimate for the least squares approximation can be extended to the $H^1$ norm, and the resulting error bound remains independent of $\varepsilon$. Specifically, let $G(\bm{x};\bm{Q},\bm{P})$ and its least squares approximation $\sum_{k=1}^n c^{(k)} G(\bm{x};\bm{Q}^{(k)},\bm{P}^{(k)})$ be defined as above, with the mesh and neighbor strategies fixed. For simplicity, we denote $G^{(k)}(\bm{x}):= G(\bm{x};\bm{Q}^{(k)},\bm{P}^{(k)})$. Then there exists a constant $M_\text{LSA}^{(1)}$, independent of $\varepsilon$ and $(\bm{Q},\bm{P})$, such that
        \begin{equation}\begin{aligned}
            \left\|G\left(\cdot\right)-\sum_{k=1}^n c^{(k)}G^{(k)}\left(\cdot\right)\right\|_{H^1}\leq M_\text{LSA}^{(1)}\left\|G(\cdot)\right\|_{H^1}.
            \label{eq:3.add1}
        \end{aligned}\end{equation}
    This follows by gradient estimates obtained via the triangle inequality
    \begin{equation}\begin{aligned}
        &\left\|\nabla G-\sum_{k=1}^n c^{(k)}\nabla G^{(k)}\right\|_{L^2}
        \leq I_1+I_2+I_3,
        \label{eq:3.add5}
    \end{aligned}\end{equation}    
    where the three terms on the right side of the above inequality are defined and estimated as follows:
    \begin{equation}\begin{aligned}
        I_1
        :=\left\|\frac{\mathrm{i}}{\varepsilon}\bm{P}\left(G-\sum_{k=1}^n c^{(k)}G^{(k)}\right)\right\|_{L^2}
        \leq\frac{|\bm{P}|}{\varepsilon}M_\text{LSA}(\pi\varepsilon)^{\frac{d}{4}}\leq M_\text{LSA}\left\|G\right\|_{H^1},
        \label{eq:3.add6}
    \end{aligned}\end{equation}
    \begin{equation}\begin{aligned}
        I_2
        :=\left\|\frac{(\bm{x}-\bm{Q})}{\varepsilon}\left(G-\sum_{k=1}^n c^{(k)}G^{(k)}\right)\right\|_{L^2}
        \leq\frac{C}{\sqrt{\varepsilon}}M_\text{LSA}(\pi\varepsilon)^{\frac{d}{4}}\leq CM_\text{LSA}\left\|G\right\|_{H^1},
        \label{eq:3.add9}
    \end{aligned}\end{equation}
    and
    \begin{equation}\begin{aligned}
        I_3
        &:=\left\|\sum_{k=1}^n c^{(k)}\left(\frac{\mathrm{i}}{\varepsilon}\left(\bm{P}-\bm{P}^{(k)}\right)+\frac{\mathrm{1}}{\varepsilon}\left(\bm{Q}-\bm{Q}^{(k)}\right)\right)G^{(k)}\right\|_{L^2}\\
        &\leq\sum_{k=1}^n \left|c^{(k)}\right| \left|{\frac{\mathrm{i}}{\varepsilon}\left(\bm{P}-\bm{P}^{(k)}\right)+\frac{\mathrm{1}}{\varepsilon}\left(\bm{Q}-\bm{Q}^{(k)}\right)}\right| \left\|G^{(k)}\right\|_{L^2}\\
        &\leq\sum_{k=1}^n \left|c^{(k)}\right|\frac{C(C_q+C_p)}{\sqrt{\varepsilon}}(\pi\varepsilon)^{\frac{d}{4}}\leq\sum_{k=1}^n \left|c^{(k)}\right|{C(C_q+C_p)}\left\|G\right\|_{H^1}.
        \label{eq:3.add7}
    \end{aligned}\end{equation}
    Since all these bounding constants are independent of $\varepsilon$ and $(\bm{Q},\bm{P})$, the statement holds. 
    
    To investigate the numerical behavior of $M^{(1)}_\text{LSA}$, we compute 
    \begin{equation}\begin{aligned}
        \sup_{(\bm{Q},\bm{P})\in[0,\Delta q]^d\times[0,\Delta p]^d}\frac{\left\|\nabla G-\sum_{k=1}^n c^{(k)}\nabla G^{(k)}\right\|_{L^2}}{\left\|G\right\|_{H^1}}
        \label{eq:3.add8}
    \end{aligned}\end{equation}
    for different values of $\varepsilon$ and neighbor strategies, with the mesh strategy fixed as $\Delta q=\sqrt{\varepsilon}/2$ and $\Delta p=\pi\sqrt{\varepsilon}/8$. Take $d=1$, $\varepsilon=1/2^6,1/2^8,\cdots,1/2^{16}$ and neighbor strategies \textbf{Q2P2}, \textbf{Q2P4}, \textbf{Q4P2} and \textbf{Q4P4}. The results are shown in \Cref{tab:MLSA1}. It can be observed that the value of \Cref{eq:3.add8} converges when $\varepsilon\to 0^+$, which confirms the statement above. 
    From the error data in \Cref{tab:MLSA1}, we observe that the $H^1$ error decreases as the number of neighbors increases, consistent with the behavior in the $L^2$ case. While the numerical results suggest that the $H^1$ error is independent of $\varepsilon$ and can be made arbitrarily small by selecting a suitable neighbor strategy, it is not clear whether a sharp bound in terms of $M_\text{LSA}$ holds, as the constants in the estimate \Cref{eq:3.add7} may depend on the mesh and neighbor strategies. Therefore, we conservatively conclude that the $H^1$ error of the least squares approximation is independent of $\varepsilon$ and can be effectively controlled by an appropriate choice of neighbor strategy, although a precise bound in terms of $M_\text{LSA}$ is not established.

    \begin{table}[htbp]
    \centering
    \vspace{1em} 
        \centering
        \begin{tabular}{|c|c|c|c|c|}
            \hline
            \diagbox{$\varepsilon$}{$\text{NS}$} & $\textbf{Q2P2}$ & $\textbf{Q4P2}$ & $\textbf{Q2P4}$ & $\textbf{Q4P4}$ \\ \hline
            $1/2^{6}$   & 2.023E-03 & 4.818E-05 & 2.560E-05 & 4.856E-06 \\ \hline
            $1/2^{8}$   & 2.046E-03 & 4.874E-05 & 2.590E-05 & 4.912E-06 \\ \hline
            $1/2^{10}$  & 2.052E-03 & 4.888E-05 & 2.598E-05 & 4.926E-06 \\ \hline
            $1/2^{12}$  & 2.054E-03 & 4.892E-05 & 2.599E-05 & 4.930E-06 \\ \hline
            $1/2^{14}$  & 2.054E-03 & 4.893E-05 & 2.600E-05 & 4.931E-06 \\ \hline
            $1/2^{16}$  & 2.054E-03 & 4.893E-05 & 2.600E-05 & 4.931E-06 \\ \hline
        \end{tabular}
        \caption{Values of \Cref{eq:3.add8} for different $\varepsilon$ values and neighbor strategies.}
        \label{tab:MLSA1}
\end{table}

\end{remark}

\subsection{Improved versions of FGGC}\label{sec:3.2}

In $\textbf{Step 3}$ of \Cref{alg:fggc}, performing the least squares approximation for each wave-packet involves computing the pseudo-inverse $\bm{A}^\dagger$ and the matrix-vector multiplication $\bm{A}^\dagger\bm{f}$. This requires $O(n^3+n^2)$ operations for each wave-packet. However, recalling \Cref{eq:shift_normal_equation} and \Cref{eq:shift_coefficient1} from the proof of \Cref{thm:LSA_in_one_grid}, we see that approximating $G(\bm{x};\bm{Q},\bm{P})$ using $G\left(\bm{x};\bm{\mathcal{N}}(\bm{Q},\bm{P};k)\right)$ is almost equivalent to approximating $G\left(\bm{x};\operatorname{frac}\bm{Q},\operatorname{frac}\bm{P}\right)$ using $G\left(\bm{x};\bm{\mathcal{N}}(\bm{0},\bm{0};k)\right)$, differing only by a constant factor in the approximation coefficients. It should be noted that, as the second equation in \Cref{eq:shift_normal_equation} shows, $\widetilde{A}_{jk}$ is fixed with given mesh strategy and neighbor strategy, so $\widetilde{\bm{A}}^\dagger$ can be pre-computed independently. In doing so, to obtain the LSA coefficients for each off-grid wave-packet, it suffices to calculate $\widetilde{\bm{f}}$ and $\widetilde{\bm{c}}=\widetilde{\bm{A}}^\dagger\widetilde{\bm{f}}$, and then \Cref{eq:shift_coefficient1} yields that the LSA coefficients $\left\{c^{(k)}\right\}_{k=1}^n$ can be obtained from $\left\{\widetilde{c}^{(k)}\right\}_{k=1}^n$. 
 
By pre-computing $\widetilde{\bm{A}}^\dagger$, the LSA process requires only $O\left(n^3\right)+O\left(Mn^2\right)$ operations instead of $O\left(Mn^3\right)+O\left(Mn^2\right)$ and reduces the computational cost. Consequently, we propose an improved version of the FGGC algorithm, presented in \Cref{alg:fggcgood}.

\begin{algorithm}\caption{Frozen Gaussian grid-point correction (FGGC) - An improved version.}
\label{alg:fggcgood}
\begin{algorithmic}
\STATE{\textbf{Step 0: Preparation}}
\STATE{Determine the mesh strategy $\Delta x=O(\varepsilon),\Delta q=O\left(\sqrt{\varepsilon}\right),\Delta p=O\left(\sqrt{\varepsilon}\right)$.}
\STATE{Determine the neighbor strategy, including $n$ and $\bm{\delta q}^{(k)}$, $\bm{\delta p}^{(k)}$.}
\STATE{Pre-compute $\widetilde{\bm{A}}^\dagger$.}
\STATE{\textbf{Step 1: Initial decomposition}}
\STATE{For each $\bm{q}$, compute $A(0,\bm{q},\bm{p})$ by \Cref{eq:initial_decomposition}.}
\STATE{\textbf{Step 2: Time evolution}}
\STATE{For each wave-packet, evolve the ODE system \Cref{eq:2.3}$\sim$\Cref{eq:2.6} up to time $t$, and obtain the FGA variables $A,S,\bm{Q},\bm{P}$.}
\STATE{\textbf{Step 3: Least squares approximation}}
\STATE{For each wave-packet, calculate $\widetilde{\bm{f}}$ as \Cref{eq:shift_normal_equation}, let $\widetilde{\bm{c}}=\widetilde{\bm{A}}^\dagger\widetilde{\bm{f}}$, and then calculate $\bm{c}$ as \Cref{eq:shift_coefficient1}.}
\STATE{\textbf{Step 4: Wave reconstruction}}
\STATE{Compute the wave function based on \Cref{eq:wave_reconstruction1} and \Cref{eq:wave_reconstruction2}.}
\end{algorithmic}
\end{algorithm}

Furthermore, motivated by the following two observations, we propose a multi-step version of the FGGC algorithm: 

(1) Most of the computation time is concentrated in the ODE evolution step when the final time is large enough or $\varepsilon$ is not small enough; 

(2) The FGGC evolves a set of on-grid wave-packets to another set of on-grid wave-packets. This process can be viewed as a linear map and can be learned by the solver. 

Specifically, suppose $T_\text{final}=T_\text{multi}T_\text{evo}$, which means running the algorithm $T_\text{multi}$ times and each time evolving the wave function up to time $T_\text{evo}$. For a fixed time $T_\text{evo}$, a single on-grid wave-packet before evolution corresponds to $n$ on-grid wave-packets after evolution, each carrying a weight. This process can be viewed as a linear map, which takes an initial position as input and produces $n$ final positions and $n$ coefficients as output. For simplicity, the $n$ final positions can also be represented by a single final position once the neighbor strategy is determined. At each iteration, the algorithm begins with the initial decomposition. Then, the solver learns the evolution and splitting results consisting of $n+2$ triples (initial position, final position, and $n$ splitting coefficients) and improves efficiency in the subsequent runs. Once all the wave-packets have been computed, the wave function can be reconstructed, completing one iteration of the algorithm. Repetition of the above procedure $T_\text{multi}$ times leads to the final state of the wave function. The multi-step version of FGGC is detailed in \Cref{alg:fggc_multistep}.

\begin{algorithm}\caption{Frozen Gaussian grid-point correction (FGGC) - A multi-step version.}
\label{alg:fggc_multistep}
\begin{algorithmic}
\STATE{\textbf{Step 0: Preparation}}
\STATE{Determine the mesh strategy $\Delta x=O(\varepsilon),\Delta q=O\left(\sqrt{\varepsilon}\right),\Delta p=O\left(\sqrt{\varepsilon}\right)$.}
\STATE{Determine the neighbor strategy, including $n$ and $\bm{\delta q}^{(k)}$, $\bm{\delta p}^{(k)}$.}
\STATE{Pre-compute $\widetilde{\bm{A}}^\dagger$.}
\FOR{$N_\text{T} = 1,\dots,T_\text{multi}$}
\STATE{\textbf{Step 1: Initial decomposition}}
\STATE{For each $\bm{q}$, compute $A(0,\bm{q},\bm{p})$ by \Cref{eq:initial_decomposition}, in which $u(0,\bm{x})$ should be substituted by $u((N_\text{T}-1)T_\text{evo},\bm{x})$.}
\STATE{\textbf{Step 2: Time evolution and least squares approximation}}
\STATE{For each wave-packet not learned yet: evolve the ODE system \Cref{eq:2.3}$\sim$\Cref{eq:2.6} up to time $T_\text{evo}$, calculate the approximation coefficients $\bm{c}$ using the pre-computed $\widetilde{\bm{A}}^\dagger$. The relationship between the initial position, final position and splitting coefficients is learned during the process.}
\STATE{\textbf{Step 3: Wave reconstruction}}
\STATE{Compute the wave function $u(N_\text{T}T_\text{evo},\bm{x})$ based on \Cref{eq:wave_reconstruction1} and \Cref{eq:wave_reconstruction2}.}
\ENDFOR
\end{algorithmic}
\end{algorithm}

The advantages of this multi-step solver are: 

(1) One may only need to evolve a larger number of points during the first run of the algorithm, and subsequently evolve much fewer points in later runs, which may improve efficiency compared to evolving all points up to the final time. 

(2) The solver may eventually learn the entire map of the grid points as the algorithm runs sufficiently many times, and by then, there would be no need to calculate the ODE evolution and LSA anymore. 

(3) More information about the wave function at intermediate stages can be obtained. 

However, this design also has its disadvantages: 

(1) The error accumulates with each run. 

(2) The algorithm is more efficient only if the ODE evolution step dominates the entire computation time of the algorithm.

\section{Numerical Examples}\label{sec:4}
In this section, we present several numerical examples to verify the accuracy and efficiency of the algorithm. Our objectives are as follows: 

(1) To demonstrate numerically that the error introduced by the least squares approximation is independent of $\varepsilon$, by calculating $\left\|u_\textup{FGA}-u_\textup{FGGC}\right\|_{L^2}$ and observing its trend as $\varepsilon$ varies.

(2) To compare the efficiency improvements of FGGC with FGA and TSSP.

(3) To investigate how different neighbor strategies affect the performance of the FGGC algorithm. 

All numerical experiments are carried out on a system equipped with a 96-core AMD Ryzen Threadripper 7995WX processor (384MB of L3 cache), 256GB of RAM, running Ubuntu 24.04. Parallel implementations are compiled using the Intel IFX Fortran compiler 2025.0.4, and parallelization is achieved via OpenMP. All CPU runtimes reported in the following subsections are in seconds.

\subsection{One-dimensional case}\label{section:4.1}
\newtheorem{example}{Example}
\begin{example}
    Take
    \begin{equation}\begin{aligned}
        u_0(x)=\left(\frac{64}{\pi}\right)^{\frac{1}{4}}\operatorname{exp}\left(-32x^2+\frac{\mathrm{i}}{\varepsilon}x\right)\text{\ \ and\ \ } V(x)=1-\cos(\pi x).
        \label{eq:ex11}
    \end{aligned}\end{equation}
    We solve in the spatial interval $[-2,2]$ with the final time $T=0.8s$.
    \label{ex:ex1}
\end{example}

We compute the FGGC, FGA, and TSSP solutions, using the latter two as reference benchmarks. The mesh parameters are set as $\Delta x=\varepsilon$, $\Delta q=\sqrt{\varepsilon}/2$, $\Delta p=\pi\sqrt{\varepsilon}/8$ for both FGGC and FGA, and the ODE system is solved using the 4th-order Runge-Kutta method with time step $\Delta t=10^{-4}$. The TSSP solution uses $\Delta x=\varepsilon$ and $\Delta t=10^{-4}$.

The $L^2$ errors for FGA and FGGC with different neighbor strategies are shown in \Cref{tab:ex1_totalerror_V1}, using TSSP as the reference. It is observed that FGGC achieves nearly the same overall error as FGA. The error introduced by the LSA is reported in \Cref{tab:ex1_lsaerror_V1}, with FGA being the reference. The LSA error is sufficiently small and decreases rapidly as the neighbor strategy is refined. Furthermore, we examine the dependence of the error on $\varepsilon$. As shown in \Cref{tab:ex1_lsaerror_V1}, the LSA error of FGGC remains almost unchanged as $\varepsilon$ varies, consistent with the theoretical analysis in \Cref{sec:3.1}.

Both FGA and FGGC are highly parallelizable. Although TSSP can also be parallelized, its efficiency may be limited by the ``butterfly operations'' in the FFT algorithm. The runtimes of the three algorithms in sequential and parallel scenarios are presented in \Cref{tab:ex1_sequtime} and \Cref{tab:ex1_paratime}, respectively. For large $\varepsilon$, the runtime improvement of FGGC over FGA is not significant, as most of the computational cost is spent on solving the ODE system, making the reconstruction step negligible. However, as $\varepsilon$ decreases, the efficiency advantage of FGGC becomes increasingly evident. Compared to TSSP, both FGA and FGGC are more efficient, especially in parallel computing scenarios.

\subsection{Two-dimensional case}\label{section:4.2}
\begin{example}
    Take
    \begin{equation}\begin{aligned}
        u_0(x,y)=\left(\frac{64}{\pi}\right)^{\frac{1}{2}}\operatorname{exp}\left(-32(x^2+y^2)+\frac{\mathrm{i}}{\varepsilon}(x+y)\right),
        \label{eq:ex21}
    \end{aligned}\end{equation}
    and
    \begin{equation}\begin{aligned}
        V_1(x,y)=1-e^{x^2+y^2}\text{\ \ or\ \ }V_2(x,y)=\frac{x^2+y^2}{2}.
        \label{eq:ex22}
    \end{aligned}\end{equation}
    We solve in the spatial interval $[-2,2]^2$ with the final time $T=0.8s$.
    \label{ex:ex2}
\end{example}

Similarly to \Cref{ex:ex1}, we set $\Delta x=\varepsilon$, $\Delta q=\sqrt{\varepsilon}/2$, $\Delta p=\pi\sqrt{\varepsilon}/8$, and $\Delta t=10^{-4}$ for FGGC and FGA, and $\Delta x=\varepsilon$ and $\Delta t=10^{-4}$ for TSSP. The $L^2$ errors for FGA and FGGC solutions are reported in \Cref{tab:ex2_totalerror_V1} and \Cref{tab:ex2_totalerror_V2}. The LSA errors are reported in \Cref{tab:ex2_LSAerror_V1} and \Cref{tab:ex2_LSAerror_V2}. These results indicate that the LSA error is also sufficiently small, even compared to the discretization or truncation errors. Thus, the error introduced by the LSA is negligible, independent of $\varepsilon$, and decreases rapidly as the number of neighbor points increases.

For the two-dimensional case, all algorithms are run in parallel. In the case of potential $V_1$, the runtimes of the three algorithms are reported in \Cref{tab:ex2_totaltime}. The most notable finding is that FGGC dramatically reduces the computational cost of the reconstruction step, making it several times faster than FGA. Although the least squares approximation introduces some additional cost, it is negligible compared to the significant time savings in reconstruction. When $\varepsilon$ is sufficiently small, TSSP becomes extremely time-consuming (e.g., about 3 days for $\varepsilon=2^{-14}$), while the parallelized FGGC can produce reliable results in less than 8 minutes.

\subsection{Three-dimensional case}\label{section:4.3}
\begin{example}
    Take 
    \begin{equation}\begin{aligned}
        u_0(x,y,z)=\left(\frac{64}{\pi}\right)^\frac{3}{4}\operatorname{exp}\left(-32(x^2+y^2+z^2)+\frac{\mathrm{i}}{\varepsilon}(x+y+z)\right),
        \label{eq:ex31}
    \end{aligned}\end{equation}
    and
    \begin{equation}\begin{aligned}
        V(x,y,z)=\frac{x^2+y^2+z^2}{2}.
        \label{eq:ex32}
    \end{aligned}\end{equation}
    We solve in the spatial interval $[-2,2]^3$ with the final time $T=0.8s$.
    \label{ex:ex3}
\end{example}

For the three-dimensional cases, the mesh steps for FGGC and FGA are set as follows: Take $\Delta x=\varepsilon$, $\Delta t=10^{-4}$. For $\varepsilon=1/2^5$ or $1/2^7$, set $\Delta q=\sqrt{\varepsilon/2}/2$ and $\Delta p=\pi\sqrt{\varepsilon/2}/8$; For $\varepsilon=1/2^6$ or $1/2^8$, set $\Delta q=\sqrt{\varepsilon}/2$ and $\Delta p=\pi\sqrt{\varepsilon}/8$. TSSP uses $\Delta x=\varepsilon$ and $\Delta t=10^{-4}$. The $L^2$ errors for FGA and FGGC are shown in \Cref{tab:ex3_totalerror}, and the LSA errors for FGGC are in \Cref{tab:ex3_LSAerror}. 
It can be seen that the LSA error remains small, independent of $\varepsilon$, and decreases rapidly with more neighbor points.

We further analyze the efficiency of the three algorithms in parallel scenario. The runtimes are reported in \Cref{tab:ex3_totaltime}. Again, FGGC significantly reduces the computational cost of the reconstruction step. In three dimensions, solving the Schr{\"o}dinger equation is extremely challenging: both TSSP and FGA become prohibitively expensive as $\varepsilon$ decreases, while FGGC remains practical due to its efficient reconstruction.

\subsection{Multi-step FGGC Algorithm}\label{section:4.4}

In this subsection, we investigate the error and efficiency of the multi-step FGGC algorithm (see \Cref{alg:fggc_multistep}).  The first example is given in one dimension:

\begin{example}
    Take 
    \begin{equation}\begin{aligned}
        u_0(x)=\sqrt[4]{\frac{64}{\pi}}\operatorname{exp}\left(-32x^2+\frac{\mathrm{i}}{\varepsilon}x\right),
        \label{eq:ex41}
    \end{aligned}\end{equation}
    and 
    \begin{equation}\begin{aligned}
        V_1(x)=1-\cos(\pi x)\text{\ \ or\ \ }V_2(x)=\frac{x^2}{2}
        \label{eq:ex42}
    \end{aligned}\end{equation}
    We solve in the spatial interval $[-2,2]$ with $\varepsilon=1/2^{12}$ and the final time $T=0.8s$, using the multi-step version FGGC.
    \label{ex:ex4}
\end{example}

We use the same mesh steps as in \Cref{section:4.1} and the neighbor strategy \textbf{Q2P2}. Both potential functions in this example both form a potential well, so the wave-packets remain confined during evolution. With $T=0.8s$ and $k=10^{-4}$, the ODE system requires $8000$ steps. We decompose these $8000$ steps into several scenarios: $4000\times2$, $2000\times4$, $1000\times8$, $500\times16$, and $250\times32$. The errors and runtimes for each decomposition are shown in \Cref{tab:ex4_error_V1}, \Cref{tab:ex4_error_V2}, and \Cref{tab:ex4_runtime_V1}.

We find that the multi-step FGGC can further improve efficiency for certain decompositions. The $1000\times8$ decomposition achieves the best performance.  As the final time is divided into more segments (i.e., as $T_\text{multi}$ increases), the total CPU runtime first decreases and then increases. This is because the number of ODE evolution steps may decrease, but the number of initial decompositions and reconstructions increase with $T_\text{multi}$. Regarding error, as the number of segments increases, the accumulated iteration error also grows. The results in \Cref{tab:ex4_error_V1} and \Cref{tab:ex4_error_V2} show that the iteration error accumulates by a factor of several times, but remains acceptable in our experiment.

Next, we consider a two-dimensional example: 
\begin{example}
    Take 
    \begin{equation}\begin{aligned}
        u_0(x,y)=\sqrt{\frac{64}{\pi}}\operatorname{exp}\left(-32(x^2+y^2)+\frac{\mathrm{i}}{\varepsilon}(x+y)\right),
        \label{eq:ex51}
    \end{aligned}\end{equation}
    and  
    \begin{equation}\begin{aligned}
        V_1(x,y)=1-e^{x^2+y^2} \text{\ \ or\ \ }V_2(x,y)=\frac{x^2+y^2}{2}.
        \label{eq:ex52}
    \end{aligned}\end{equation}
    We solve on the spatial interval $[-2,2]^2$ with $\varepsilon=1/2^6$ and final time $T=1.6s$, using the multi-step version FGGC.
    \label{ex:ex5}
\end{example}

We use the same mesh steps as in \Cref{section:4.2} and the neighbor strategy \textbf{Q2P2}. The $16000$ evolution steps are decomposed into $8000\times2$, $4000\times4$, $2000\times8$, $1000\times16$, and $500\times32$. The errors are shown in \Cref{tab:ex5_error_V1} and \Cref{tab:ex5_error_V2}, demonstrating that the multi-step algorithm does not introduce significant iteration error. The runtimes are presented in \Cref{tab:ex5_runtime_V1}, where the $2000\times8$ decomposition achieves the best efficiency. 


\begin{table}[p]
\centering
\begin{tabular}{ll *{5}{c}}
\toprule
\multicolumn{2}{c}{\textbf{$\varepsilon$}} & $1/2^{6}$ & $1/2^{8}$ & $1/2^{10}$ & $1/2^{12}$ & $1/2^{14}$ \\
\midrule
\multicolumn{2}{c}{\textbf{FGA}} & 4.097E-03 & 5.808E-04 & 1.349E-04 & 5.478E-05 & 5.471E-05 \\
\midrule
\multirow{3}{*}{\textbf{FGGC}} & \textbf{Q2P2} & 4.118E-03 & 6.011E-04 & 1.949E-04 & 1.475E-04 & 1.379E-04 \\
& \textbf{Q4P2} & 4.097E-03 & 5.809E-04 & 1.350E-04 & 5.486E-05 & 5.474E-05 \\
& \textbf{Q4P4} & 4.097E-03 & 5.808E-04 & 1.349E-04 & 5.477E-05 & 5.470E-05 \\
\bottomrule
\end{tabular}
\caption{Example 1, the overall error of FGA and FGGC (in comparison with TSSP).}
\label{tab:ex1_totalerror_V1}

\vspace{1cm}
\centering
\begin{tabular}{ll *{5}{c}}
\toprule
\multicolumn{2}{c}{\textbf{$\varepsilon$}} & $1/2^{6}$ & $1/2^{8}$ & $1/2^{10}$ & $1/2^{12}$ & $1/2^{14}$\\
\midrule
\multirow{3}{*}{\textbf{FGGC}} & \textbf{Q2P2} & 1.135E-04 & 1.186E-04 & 1.279E-04 & 1.319E-04 & 1.300E-04\\
& \textbf{Q4P2} & 9.788E-07 & 8.641E-07 & 9.525E-07 & 8.141E-07 & 8.195E-07\\
& \textbf{Q4P4} & 9.295E-08 & 8.712E-08 & 8.421E-08 & 7.017E-08 & 7.902E-08\\
\bottomrule
\end{tabular}
\caption{Example 1, the LSA error of FGGC (in comparison with FGA).}
\label{tab:ex1_lsaerror_V1}

\vspace{1cm}
\centering
\begin{tabular}{cc *{5}{c}}
\toprule
\multicolumn{2}{c}{\textbf{$\varepsilon$}} & $1/2^{6}$ & $1/2^{8}$ & $1/2^{10}$ & $1/2^{12}$ & $1/2^{14}$\\
\midrule
\multicolumn{2}{c}{\textbf{TSSP}} & 0.009 & 0.040 & 0.187 & 0.846 & 4.114\\
\midrule
\multicolumn{2}{c}{\textbf{FGA}} & 0.125 & 0.156 & 0.269 & 0.516 & 1.055\\
\midrule
\multirow{3}{*}{\textbf{FGGC}} & \textbf{Q2P2} & 0.125 & 0.155 & 0.264 & 0.504 & 0.993\\
& \textbf{Q4P2} & 0.125 & 0.155 & 0.263 & 0.501 & 0.994\\
& \textbf{Q4P4} & 0.129 & 0.155 & 0.263 & 0.501 & 0.991\\
\bottomrule
\end{tabular}
\caption{Example 1, the CPU runtime of TSSP, FGA and FGGC in sequential scenario.}
\label{tab:ex1_sequtime}

\vspace{1cm}
\centering
\begin{tabular}{cc *{5}{c}}
\toprule
\multicolumn{2}{c}{\textbf{$\varepsilon$}} & $1/2^{6}$ & $1/2^{8}$ & $1/2^{10}$ & $1/2^{12}$ & $1/2^{14}$\\
\midrule
\multicolumn{2}{c}{\textbf{TSSP}} & 0.010 & 0.043 & 0.187 & 0.846 & 3.147\\
\midrule
\multicolumn{2}{c}{\textbf{FGA}} & 0.013 & 0.013 & 0.030 & 0.060 & 0.176\\
\midrule
\multirow{3}{*}{\textbf{FGGC}} & \textbf{Q2P2} & 0.009 & 0.009 & 0.015 & 0.037 & 0.073\\
& \textbf{Q4P2} & 0.009 & 0.009 & 0.010 & 0.029 & 0.066\\
& \textbf{Q4P4} & 0.009 & 0.009 & 0.010 & 0.032 & 0.072\\
\bottomrule
\end{tabular}
\caption{Example 1, the CPU runtime of TSSP, FGA and FGGC in parallel scenario.}
\label{tab:ex1_paratime}
\end{table}

\begin{table}[p]
\centering
\begin{tabular}{ll *{5}{c}}
\toprule
\multicolumn{2}{c}{\textbf{$\varepsilon$}} & $1/2^{6}$ & $1/2^{8}$ & $1/2^{10}$ & $1/2^{12}$ & $1/2^{14}$ \\
\midrule
\multicolumn{2}{c}{\textbf{FGA}} & 5.453E-03 & 1.412E-03 & 3.620E-04 & 1.419E-04 & - \\
\midrule
\multirow{3}{*}{\textbf{FGGC}} & \textbf{Q2P2} & 5.460E-03 & 1.414E-03 & 3.688E-04 & 1.472E-04 & - \\
&                                \textbf{Q4P2} & 5.453E-03 & 1.412E-03 & 3.620E-04 & 1.419E-04 & - \\
&                                \textbf{Q4P4} & 5.453E-03 & 1.412E-03 & 3.621E-04 & 1.419E-04 & - \\
\bottomrule
\end{tabular}
\caption{Example 2, the overall error of FGA and FGGC (in comparison with TSSP) with potential $V_1$.}
\label{tab:ex2_totalerror_V1}

\vspace{1cm}
\centering
\begin{tabular}{ll *{5}{c}}
\toprule
\multicolumn{2}{c}{\textbf{$\varepsilon$}} & $1/2^{6}$ & $1/2^{8}$ & $1/2^{10}$ & $1/2^{12}$ & $1/2^{14}$\\
\midrule
\multirow{3}{*}{\textbf{FGGC}} & \textbf{Q2P2} & 2.151E-05 & 2.173E-05 & 2.069E-05 & 1.950E-05 &  1.884E-05\\
&                                \textbf{Q4P2} & 9.007E-08 & 8.509E-08 & 8.482E-08 & 8.628E-08 &  8.434E-08\\
&                                \textbf{Q4P4} & 5.680E-08 & 6.012E-08 & 5.818E-08 & 5.850E-08 &  5.824E-08\\
\bottomrule
\end{tabular}
\caption{Example 2, the LSA error of FGGC (in comparison with FGA) with potential $V_1$.}
\label{tab:ex2_LSAerror_V1}

\vspace{1cm}
\centering
\begin{tabular}{ll *{5}{c}}
\toprule
\multicolumn{2}{c}{\textbf{$\varepsilon$}} & $1/2^{6}$ & $1/2^{8}$ & $1/2^{10}$ & $1/2^{12}$ & $1/2^{14}$ \\
\midrule
\multicolumn{2}{c}{\textbf{FGA}} & 6.640E-06 & 1.445E-05 & 6.659E-05 & 1.154E-04 & -\\
\midrule
\multirow{3}{*}{\textbf{FGGC}} & \textbf{Q2P2} & 1.698E-05 & 3.268E-05 & 6.962E-05 & 1.167E-04 & - \\
&                                \textbf{Q4P2} & 6.645E-06 & 1.446E-05 & 6.659E-05 & 1.154E-04 & -\\
&                                \textbf{Q4P4} & 6.640E-06 & 1.445E-05 & 6.659E-05 & 1.153E-04 & -\\
\bottomrule
\end{tabular}
\caption{Example 2, the overall error of FGA and FGGC (in comparison with TSSP) with potential $V_2$.}
\label{tab:ex2_totalerror_V2}

\vspace{1cm}
\centering
\begin{tabular}{ll *{5}{c}}
\toprule
\multicolumn{2}{c}{\textbf{$\varepsilon$}} & $1/2^{6}$ & $1/2^{8}$ & $1/2^{10}$ & $1/2^{12}$ & $1/2^{14}$\\
\midrule
\multirow{3}{*}{\textbf{FGGC}} & \textbf{Q2P2} & 1.600E-05 & 2.942E-05 & 2.034E-05 & 1.753E-05 & 1.832E-05\\
&                                \textbf{Q4P2} & 1.231E-07 & 1.378E-07 & 1.254E-07 & 1.347E-07 & 1.329E-07\\
&                                \textbf{Q4P4} & 6.201E-08 & 6.288E-08 & 6.100E-08 & 6.068E-08 & 6.331E-08\\
\bottomrule
\end{tabular}
\caption{Example 2, the LSA error of FGGC (in comparison with FGA) with potential $V_2$.}
\label{tab:ex2_LSAerror_V2}

\vspace{1cm}
\centering
\begin{tabular}{cc *{5}{c}}
\toprule
\multicolumn{2}{c}{\textbf{$\varepsilon$}} & $1/2^{6}$ & $1/2^{8}$ & $1/2^{10}$ & $1/2^{12}$ & $1/2^{14}$\\
\midrule
\multicolumn{2}{c}{\textbf{TSSP}} & 3.676 & 19.756 & 384.818 & 4962.956 & - \\
\midrule
\multicolumn{2}{c}{\textbf{FGA}} & 2.278 & 5.468 & 37.638 & 447.061 & 6746.639 \\
\midrule
\multirow{3}{*}{\textbf{FGGC}} & \textbf{Q2P2} & 2.022 & 3.281 & 11.115 & 54.325 & 443.103 \\
& \textbf{Q4P2} & 2.066 & 3.397 & 11.355 & 56.077 & 459.125 \\
& \textbf{Q4P4} &2.368 & 3.784 & 12.382 & 60.031 & 473.794 \\
\bottomrule
\end{tabular}
\caption{Example 2, the CPU runtime of TSSP, FGA and FGGC.}
\label{tab:ex2_totaltime}
\end{table}

\begin{table}[p]
\centering
\begin{tabular}{ll *{4}{c}}
\toprule
\multicolumn{2}{c}{\textbf{$\varepsilon$}} & $1/2^{5}$ & $1/2^{6}$ & $1/2^{7}$ & $1/2^{8}$ \\
\midrule
\multicolumn{2}{c}{\textbf{FGA}} & 2.933E-03 & 1.653E-03 & 2.903E-03 & 5.997E-03 \\
\midrule
\multirow{2}{*}{\textbf{FGGC}} & \textbf{Q2P2} & 2.938E-03 & 1.645E-03 & 2.904E-03 & 5.998E-03 \\
&                                \textbf{Q4P2} & 2.933E-03 & 1.653E-03 & 2.903E-03 & 5.997E-03 \\
\bottomrule
\end{tabular}
\caption{Example 3, the overall error for FGA and FGGC (in comparison with TSSP).}
\label{tab:ex3_totalerror}

\vspace{1cm}
\centering
\begin{tabular}{ll *{4}{c}}
\toprule
\multicolumn{2}{c}{\textbf{$\varepsilon$}} & $1/2^{5}$ & $1/2^{6}$ & $1/2^{7}$ & $1/2^{8}$\\
\midrule
\multirow{2}{*}{\textbf{FGGC}} & \textbf{Q2P2} & 5.002E-05 & 2.050E-05 & 5.383E-05 & 3.716E-05 \\
&                                \textbf{Q4P2} & 5.899E-07 & 2.326E-07 & 5.365E-07 & 2.284E-07 \\
\bottomrule
\end{tabular}
\caption{Example 3, the LSA error for FGGC (in comparison with FGA).}
\label{tab:ex3_LSAerror}

\vspace{1cm}
\centering
\begin{tabular}{cc *{4}{c}}
\toprule
\multicolumn{2}{c}{\textbf{$\varepsilon$}} & $1/2^{5}$ & $1/2^{6}$ & $1/2^{7}$ & $1/2^{8}$\\
\midrule
\multicolumn{2}{c}{\textbf{TSSP}} & 42.575 & 314.893 & 2018.370 & 17884.954 \\
\midrule
\multicolumn{2}{c}{\textbf{FGA}} & 1179.614 & 2346.721 & 6954.061 & 24422.830 \\
\midrule
\multirow{2}{*}{\textbf{FGGC}} & \textbf{Q2P2} & 364.995 & 343.005 & 476.449 & 1070.442 \\
& \textbf{Q4P2} & 417.945 & 396.375 & 591.764 & 1307.381 \\
\bottomrule
\end{tabular}
\caption{Example 3, the CPU runtime for TSSP, FGA and FGGC.}
\label{tab:ex3_totaltime}
\end{table}

\begin{table}[p]
\centering
\small
\begin{tabular}{l *{7}{c}}
\toprule
\multirow{2}{*}{\textbf{\ \ FGA}} & \multicolumn{6}{c}{\textbf{FGGC}} \\
\cmidrule(lr){2-7}
             & \textbf{$8000\times1$} & \textbf{$4000\times2$} & \textbf{$2000\times4$} & \textbf{$1000\times8$} & \textbf{$500\times16$} & \textbf{$250\times32$}\\
\midrule
5.478E-05 & 1.475E-04 & 1.406E-04 & 3.073E-04 & 3.628E-04 & 4.217E-04 & 4.250E-04 \\
\bottomrule
\end{tabular}
\caption{Example 4, the overall error for FGA and multi-step FGGC with potential $V_1$.}
\label{tab:ex4_error_V1}

\vspace{1cm}
\centering
\small
\begin{tabular}{l *{7}{c}}
\toprule
\multirow{2}{*}{\textbf{\ \ FGA}} & \multicolumn{6}{c}{\textbf{FGGC}} \\
\cmidrule(lr){2-7}
             & \textbf{$8000\times1$} & \textbf{$4000\times2$} & \textbf{$2000\times4$} & \textbf{$1000\times8$} & \textbf{$500\times16$} & \textbf{$250\times32$}\\
\midrule
3.151E-05 & 3.376E-05 & 6.003E-05 & 1.147E-04 & 1.582E-04 & 1.755E-04 & 2.509E-04 \\
\bottomrule
\end{tabular}
\caption{Example 4, the overall error for FGA and multi-step FGGC with potential $V_2$.}
\label{tab:ex4_error_V2}

\vspace{1cm}
\centering
\small
\begin{tabular}{l *{8}{c}}
\toprule
\multirow{2}{*}{\textbf{TSSP}} & \multirow{2}{*}{\textbf{FGA}} & \multicolumn{6}{c}{\textbf{FGGC}} \\
\cmidrule(lr){3-8}
 &  & \textbf{$8000\times1$} & \textbf{$4000\times2$} & \textbf{$2000\times4$} & \textbf{$1000\times8$} & \textbf{$500\times16$} & \textbf{$250\times32$}\\
\midrule
1.132 & 0.058 & 0.041 & 0.035 & 0.024 & 0.021 & 0.030 & 0.064 \\
\bottomrule
\end{tabular}
\caption{Example 4, the CPU runtime of TSSP, FGA and multi-step FGGC.}
\label{tab:ex4_runtime_V1}

\vspace{1cm}
\centering
\small
\begin{tabular}{l *{7}{c}}
\toprule
\multirow{2}{*}{\textbf{\ \ FGA}} & \multicolumn{6}{c}{\textbf{FGGC}} \\
\cmidrule(lr){2-7}
             & \textbf{$16000\times1$} & \textbf{$8000\times2$} & \textbf{$4000\times4$} & \textbf{$2000\times8$} & \textbf{$1000\times16$} & \textbf{$500\times32$}\\
\midrule
4.773E-03 & 4.723E-03 & 3.974E-03 & 2.807E-03 & 2.801E-03 & 2.884E-03 & 2.995E-03 \\
\bottomrule
\end{tabular}
\caption{Example 5, the overall error for FGA and multi-step FGGC with potential $V_1$.}
\label{tab:ex5_error_V1}

\vspace{1cm}
\centering
\small
\begin{tabular}{l *{7}{c}}
\toprule
\multirow{2}{*}{\textbf{\ \ FGA}} & \multicolumn{6}{c}{\textbf{FGGC}} \\
\cmidrule(lr){2-7}
             & \textbf{$16000\times1$} & \textbf{$8000\times2$} & \textbf{$4000\times4$} & \textbf{$2000\times8$} & \textbf{$1000\times16$} & \textbf{$500\times32$}\\
\midrule
6.640E-06 & 7.793E-05 & 1.947E-05 & 6.855E-05 & 8.337E-05 & 1.936E-04 & 3.130E-04 \\
\bottomrule
\end{tabular}
\caption{Example 5, the overall error for FGA and multi-step FGGC with potential $V_2$.}
\label{tab:ex5_error_V2}

\vspace{1cm}
\centering
\small
\begin{tabular}{l *{8}{c}}
\toprule
\multirow{2}{*}{\textbf{TSSP}} & \multirow{2}{*}{\textbf{FGA}} & \multicolumn{6}{c}{\textbf{FGGC}} \\
\cmidrule(lr){3-8}
 &  & \textbf{$16000\times1$} & \textbf{$8000\times2$} & \textbf{$4000\times4$} & \textbf{$2000\times8$} & \textbf{$1000\times16$} & \textbf{$500\times32$}\\
\midrule
7.510 & 4.329 & 4.120 & 4.083 & 3.173 & 2.305 & 2.326 & 3.053 \\
\bottomrule
\end{tabular}
\caption{Example 5, the CPU runtime of TSSP, FGA and multi-step FGGC.}
\label{tab:ex5_runtime_V1}
\end{table}

\section{Conclusion and Discussion}\label{sec:5}

In this paper, we have introduced the frozen Gaussian grid-point correction (FGGC) method for efficiently computing solutions to the linear Schr{\"o}dinger equation in the semi-classical regime. Building upon the frozen Gaussian approximation, FGGC uses a straightforward ``on-grid correction'' strategy based on the least squares approximation. This approach demonstrates significant potential for reducing computational costs during the wave reconstruction step by decomposing off-grid Gaussian wave-packets into on-grid counterparts, thereby enabling the use of highly efficient fast Fourier transform algorithm. We have theoretically proved that the error of the least squares approximation has a uniform upper bound independent of $\varepsilon$. Numerical experiments confirm that this error is indeed independent of $\varepsilon$ and decreases rapidly as more neighbors are included. The FGGC algorithm achieves substantial efficiency improvements while introducing only negligible additional error. Furthermore, a multi-step version of the FGGC algorithm is proposed, which further enhances computational efficiency. The FGGC framework is not limited to the linear Schr\"odinger equation; it can be extended to other multi-scale problems suitable for the frozen Gaussian approximation, such as high-frequency wave propagation and hyperbolic systems.


\begin{thebibliography}{9}

\bibitem{GriffithsQuantumMechanics}
David J. Griffiths, Introduction to Quantum Mechanics (2nd Edition). Pearson Prentice Hall, (2004).

\bibitem{electronic_structures}
Xinzheng Li and Enge Wang, Computer Simulations of Molecules and Condensed Matter: From Electronic Structures to Molecular Dynamics. World Scientific, Singapore (2018).

\bibitem{H-measures}
Luc Tartar, H-measures, a new approach for studying homogenisation, oscillations and concentration effects in partial differential equations, Proceedings of the Royal Society of Edinburgh: Section A Mathematics, 115(3--4), 193--230 (1990).

\bibitem{Wigner-measures_Lions}
Pierre-Louis Lions and Thierry Paul, Sur les mesures de Wigner, Revista Matem\'atica Iberoamericana, 9(3), 553--618 (1993).

\bibitem{Wigner_transform}
Patrick G\'erard and Peter A. Markowich and Norbert J. Mauser and Fr\'ed\'eric Poupaud, Homogenization limits and Wigner transforms, Communications on Pure and Applied Mathematics, 50(4), 323--379 (1997).

\bibitem{WignerSP}
Ping Zhang, Wigner measure and the semiclassical limit of Schr{\"o}dinger--Poisson equations, SIAM Journal on Mathematical Analysis, 34(3), 700--718 (2002).

\bibitem{WignerFigalli}
Luigi Ambrosio and Alessio Figalli and Gero Friesecke and Johannes Giannoulis and Thierry Paul, Semiclassical limit of quantum dynamics with rough potentials and well-posedness of transport equations with measure initial data, Communications on Pure and Applied Mathematics, 64(9), 1199--1242 (2011).

\bibitem{actanumerica}
Shi Jin and Peter A. Markowich and Christof Sparber, Mathematical and computational methods for semiclassical Schr{\"o}dinger equations, Acta Numerica, 20, 121--209 (2011).

\bibitem{Schro_FD1}
Tony F. Chan and Ding Lee and Longjun Shen, Stable explicit schemes for equations of the Schr{\"o}dinger type, SIAM Journal on Numerical Analysis, 23(2), 274--281 (1986).

\bibitem{Schro_FD2}
Tony F. Chan and Longjun Shen, Stability analysis of difference schemes for variable coefficient Schr{\"o}dinger type equations, SIAM Journal on Numerical Analysis, 24(2), 336--349 (1987).

\bibitem{tssp}
Weizhu Bao and Shi Jin and Peter A. Markowich, On time-splitting spectral approximations for the Schr{\"o}dinger equation in the semiclassical regime, Journal of Computational Physics, 175(2), 487--524 (2002).

\bibitem{WKB}
Christof Sparber and Peter A. Markowich and Norbert J. Mauser, Wigner functions versus WKB-methods in multivalued geometrical optics, Asymptotic Analysis, 33(2), 153--187 (2003).

\bibitem{GBM}
Shi Jin and Hao Wu and Xu Yang, Gaussian beam methods for the Schr{\"o}dinger equation in the semi-classical regime: Lagrangian and Eulerian formulations, Communications in Mathematical Sciences, 6(4), 995--1020 (2008).

\bibitem{FGAFracSchro}
Lihui Chai and Hengzhun Chen and Xu Yang, Frozen Gaussian approximation for the fractional Schr{\"o}dinger equation, arXiv preprint arXiv:2403.18287 (2024).

\bibitem{FGAwave}
Jianfeng Lu and Xu Yang, Frozen Gaussian approximation for high frequency wave propagation, Communications in Mathematical Sciences, 9(3), 663--683 (2011).

\bibitem{FGASchro1981}
Eric J. Heller, Frozen Gaussians: A very simple semiclassical approximation, The Journal of Chemical Physics, 75(6), 2923--2931 (1981).

\bibitem{FGASchro1994}
Kenneth G. Kay, Integral expressions for the semiclassical time-dependent propagator, The Journal of Chemical Physics, 100(6), 4377--4392 (1994).

\bibitem{FGASchro2006}
Kenneth G. Kay, The Herman--Kluk approximation: Derivation and semiclassical corrections, Chemical Physics, 322(1--2), 3--12 (2006).

\bibitem{FGASchro2007}
Torben Swart and Vidian Rousse, A mathematical justification for the Herman--Kluk propagator, Communications in Mathematical Physics, 286(2), 725--750 (2009).

\bibitem{FGASchro2016}
Caroline Lasser and David Sattlegger, Discretising the Herman--Kluk propagator, Numerische Mathematik, 137(1), 119--157 (2017).

\bibitem{fgaconv}
Jianfeng Lu and Xu Yang, Frozen Gaussian approximation for general linear strictly hyperbolic systems: Formulation and Eulerian methods, Multiscale Modeling and Simulation, 10(2), 451--472 (2012).

\bibitem{FGAelastic}
James C. Hateley and Lihui Chai and Ping Tong and Xu Yang, Frozen Gaussian approximation for 3-D elastic wave equation and seismic tomography, Geophysical Journal International, 216(2), 1394--1412 (2019).

\bibitem{FGAnonstrictlyhyperbolic}
Lihui Chai and James C. Hateley and Emmanuel Lorin and Xu Yang, On the convergence of frozen Gaussian approximation for linear non-strictly hyperbolic systems, Communications in Mathematical Sciences, 19(3), 585--606 (2021).

\bibitem{FGAdirac}
Lihui Chai and Emmanuel Lorin and Xu Yang, Frozen Gaussian approximation for the Dirac equation in semiclassical regime, SIAM Journal on Numerical Analysis, 57(5), 2383--2412 (2019).

\bibitem{FGAsh1}
Zhen Huang and Limin Xu and Zhennan Zhou, Efficient frozen Gaussian sampling algorithms for nonadiabatic quantum dynamics at metal surfaces, Journal of Computational Physics, 474, 111771 (2023).

\bibitem{FGAsh2}
Jianfeng Lu and Zhennan Zhou, Improved sampling and validation of frozen Gaussian approximation with surface hopping algorithm for nonadiabatic dynamics, The Journal of Chemical Physics, 145(12), 124109 (2016).

\bibitem{FGAsh3}
Jianfeng Lu and Zhennan Zhou, Frozen Gaussian approximation with surface hopping for mixed quantum-classical dynamics: A mathematical justification of fewest switches surface hopping algorithms, Mathematics of Computation, 87(313), 2189--2232 (2018).

\bibitem{open_quantum_system}
Geshuo Wang and Siyao Yang and Zhenning Cai, Solving Caldeira-Leggett model by inchworm method with frozen Gaussian approximation, Quantum, 9, 1667 (2025).

\end{thebibliography}
\end{document}